\newtheorem{thm}{Theorem} 
\newtheorem{lem}{Lemma}
\newtheorem{cor}[thm]{Corollary}
\theoremstyle{definition}
\newtheorem{rem}{Remark}
\numberwithin{equation}{section}
\newcommand{\Leg}[2]{\left(\frac{#1}{#2}\right)}
\newcommand{\Z}{\mathbb{Z}}
\newcommand{\R}{\mathbb{R}}
\newcommand{\Oc}{\mathcal{O}}
\begin{document}


\title[On the average number of divisors]{On the average number of divisors\\ of reducible quadratic polynomials}

\author[K. Lapkova]{Kostadinka Lapkova}
\address{
Graz University of Technology\\
Institute of Analysis and Number Theory\\
Kopernikusgasse 24/II, 8010 Graz, Austria}
\email{lapkova@math.tugraz.at}

\date{17.10.2016}

\begin{abstract} We give an asymptotic formula for the divisor sum $\sum_{c<n\leq N}\tau\left((n-b)(n-c)\right)$ for integers $b<c$ of the same parity. Interestingly, the coefficient of the main term does not depend on the discriminant as long as it is a full square. We also provide effective upper bounds of the average divisor sum for some of the reducible quadratic polynomials considered before, with the same main term as in the asymptotic formula.
\end{abstract}

\subjclass[2010]{Primary 11N56; Secondary 11D09 } 
\keywords{number of divisors, quadratic polynomial, Dirichlet series}

\maketitle


\section{Introduction}\label{sec:intro}

Let $\tau(n)$ denote the number of positive divisors of the integer $n$ and $P(x)\in\Z[x]$ be a polynomial. There are many results on estimating average sums of divisors 
\begin{equation}\label{S1}\sum_{n=1}^N\tau\left(P(n)\right)\,,
\end{equation}
one of which was obtained by Erd\H os \cite{erdos}, who showed that for an irreducible polynomial $P(x)$ and for any $N>1$, we have 
$$N \log N \ll_P\sum_{n=1}^N \tau(P(n)) \ll_P N \log N\,.$$
Here the implied constants can depend both on the degree and the coefficients of the polynomial. When $P(x)$ is a quadratic polynomial Hooley \cite{hooley} and McKee \cite{mckee}, \cite{mckee2} obtained asymptotic formulae for the sum (\ref{S1}). When $\deg P(x)\geq 3$ no asymptotic formulae for (\ref{S1}) are known. A certain progress in this direction was made by Elsholtz and Tao in \S7 of \cite{elsh-tao}. 

\par  When the polynomial $P(x)$ is {\it reducible} the behavior is a little bit different. Ingham \cite{ingham} considered the additive divisor problem and proved that for a fixed positive integer $q$ the following asymptotic holds
$$\sum_{n\leq N}\tau (n)\tau(n+q)\sim \frac 6{\pi^2}\sigma_{-1}(q)N\log^2 N\,,$$
as $N\rightarrow\infty$, where $\sigma_a(q)=\sum_{d\mid q}d^a$ for $a, q\in\Z$. Later Hooley \cite{hooley} predicted that 
\begin{equation}\label{formulaHooley}\sum_{n\leq N}\tau (n^2-r^2)=A(r)N\log^2 N+\Oc (N\log N)\,,
\end{equation}
but only recently Dudek \cite{dudek} provided the exact value of the constant $A(1)$, namely $1/\zeta(2)=6/\pi^2$. The first aim of this paper is to extend Dudek's work and to find the exact values of $A(r)$ for any integer $r\geq 1$. Actually we find the main term in the asymptotic formula for \eqref{S1} for slightly more general polynomials $P(n)=(n-b)(n-c)$ for integers $b<c$, such that $b+c$ is even. 

\par For integers $k\geq 0$ and $d>0$ we define 
\begin{equation}\label{def:ro}\rho_k(d):=\#\left\{0\leq x< d\,: \quad x^2\equiv k\pmod d\right\}.
\end{equation}
The main result we need for the asymptotic estimate of the average divisor sum \eqref{S1} for reducible quadratic polynomials $P(x)$ is the following Theorem, which is of interest of its own.


\begin{thm}\label{thm1}For any integer $r\geq 1$ we have the asymptotic formula
\[\sum_{\lambda\leq N}\rho_{r^2}(\lambda)\sim \frac 6 {\pi^2}N\log N,\]
as $N\rightarrow\infty$ .
\end{thm}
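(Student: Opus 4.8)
The plan is to study the Dirichlet series $D(s)=\sum_{\lambda\ge 1}\rho_{r^2}(\lambda)\lambda^{-s}$ and to extract the asymptotics of its partial sums. The first step is to observe that, by the Chinese Remainder Theorem, for fixed $r$ the function $\lambda\mapsto\rho_{r^2}(\lambda)$ is multiplicative, so $D(s)$ factors as an Euler product $D(s)=\prod_p L_p(s)$ with $L_p(s)=\sum_{a\ge 0}\rho_{r^2}(p^a)p^{-as}$. I would then compute $\rho_{r^2}(p^a)$ explicitly by writing $r=p^\nu m$ with $p\nmid m$ and solving $x^2\equiv r^2\pmod{p^a}$: substituting $x=p^\nu y$ reduces the problem, for $a>2\nu$, to counting the square roots of $m^2$ modulo $p^{a-2\nu}$ (and accounting for the lifts back to modulus $p^a$), while for $a\le 2\nu$ one simply counts $x$ with $p^{\lceil a/2\rceil}\mid x$. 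For an odd prime $p\nmid r$ this yields the generic factor $L_p(s)=(1+p^{-s})/(1-p^{-s})$; the remaining primes (those dividing $r$, and $p=2$) give modified but completely explicit factors. This local computation, with its boundary cases and the special behaviour at $p=2$, is the main technical obstacle.

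The second step is the factorization $D(s)=\zeta(s)^2 H(s)$, where $H(s)=\prod_p L_p(s)(1-p^{-s})^2$. For every generic prime one has $L_p(s)(1-p^{-s})^2=(1+p^{-s})(1-p^{-s})=1-p^{-2s}$, so $H(s)$ differs from $\prod_p(1-p^{-2s})=1/\zeta(2s)$ by only finitely many Euler factors; hence $H(s)$ converges absolutely and defines an analytic function on $\Re(s)>1/2$. In particular $D(s)$ continues meromorphically to $\Re(s)>1/2$ with a single double pole at $s=1$ whose leading coefficient is $H(1)$.

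The key point, and the source of the $r$-independence advertised in the abstract, is that $H(1)$ does not depend on $r$. I would verify from the explicit formulas that $L_p(1)=(p+1)/(p-1)$ for \emph{every} prime $p$, regardless of $\nu=v_p(r)$: at an odd $p$ the finite contribution $1+2\sum_{k=1}^{\nu}p^{-k}$ and the geometric tail $2p^\nu\sum_{a>2\nu}p^{-a}=2p^{-\nu}/(p-1)$ combine to $1+2/(p-1)=(p+1)/(p-1)$, and the analogous (slightly longer) computation at $p=2$ again yields the value $3=(2+1)/(2-1)$. Consequently $L_p(1)(1-p^{-1})^2=1-p^{-2}$ at every prime, so $H(1)=\prod_p(1-p^{-2})=1/\zeta(2)=6/\pi^2$.

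Finally, I would convert this analytic information into the stated asymptotic. Writing $\rho_{r^2}=\tau * h$, where $h$ is the multiplicative function with $\sum_n h(n)n^{-s}=H(s)$, the absolute convergence of $H$ on $\Re(s)>1/2$ keeps the coefficients $h(n)$ small on average. Inserting the classical estimate $\sum_{m\le x}\tau(m)=x\log x+(2\gamma-1)x+O(\sqrt x)$ into $\sum_{\lambda\le N}\rho_{r^2}(\lambda)=\sum_n h(n)\sum_{m\le N/n}\tau(m)$ and summing the main terms gives $N\log N\sum_n h(n)/n+O(N)=H(1)N\log N+O(N)$, the error being controlled by the tail bound $\sum_n|h(n)|n^{-\sigma}<\infty$ for $\sigma>1/2$ (a Dirichlet hyperbola splitting handles the residual $\sqrt{N/n}$ terms). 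This yields $\sum_{\lambda\le N}\rho_{r^2}(\lambda)\sim (6/\pi^2)N\log N$. Alternatively, one may apply Perron's formula and shift the contour past the double pole at $s=1$, using standard bounds for $\zeta$ near the line $\Re(s)=1$; the residue there reproduces the same main term.
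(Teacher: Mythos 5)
Your proposal is correct, and its core computations coincide with the paper's: you rederive via CRT and $p$-adic valuations exactly the local values $\rho_{r^2}(p^\alpha)$ that the paper quotes from \S 4 of Hooley's work (Lemma \ref{lem2}), and your factorization $D(s)=\zeta(s)^2H(s)$ is the paper's $F(s)=\frac{\zeta^2(s)}{\zeta(2s)}G(s)$ with $1/\zeta(2s)$ absorbed into $H$; both proofs hinge on the same miracle that $L_p(1)=(p+1)/(p-1)$ independently of $v_p(r)$ (and $L_2(1)=3$), giving the $r$-independent constant $1/\zeta(2)$. Where you genuinely diverge is the passage from the Dirichlet series to the asymptotic. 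The paper invokes a Tauberian theorem (Lemma \ref{lem1}, from Cojocaru--Murty), which requires the auxiliary factor $G(s)$ to be holomorphic \emph{and non-vanishing} on $\Re(s)\geq 1$; securing non-vanishing costs the paper a separate elementary but tedious case analysis (Lemma \ref{lem:nonzero}) for $p=2$ and $p\mid r$. Your convolution argument $\rho_{r^2}=\tau\ast h$, combined with Dirichlet's estimate $\sum_{m\leq x}\tau(m)=x\log x+(2\gamma-1)x+O(\sqrt{x})$ and the absolute convergence of $\sum_n|h(n)|n^{-\sigma}$ for $\sigma>1/2$, needs no non-vanishing whatsoever --- only the value $H(1)=6/\pi^2$ --- and it delivers the quantitatively stronger conclusion $\sum_{\lambda\leq N}\rho_{r^2}(\lambda)=\frac{6}{\pi^2}N\log N+O(N)$, whereas the Tauberian route gives only the bare asymptotic. (Your error analysis is even simpler than you suggest: $\sum_{n\leq N}|h(n)|\sqrt{N/n}\leq N^{3/4}\sum_n|h(n)|n^{-3/4}\ll N^{3/4}$ by Rankin's trick, so no hyperbola splitting is needed.) What the paper's route buys is brevity at this final step, a single citation in place of your elementary summation; what yours buys is a self-contained argument, an explicit error term, and the elimination of an entire lemma. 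The only part of your sketch left implicit is the boundary-case bookkeeping at $p=2$ and the primes dividing $r$, but you flag it correctly and the asserted outcomes ($L_p(1)=(p+1)/(p-1)$, $L_2(1)=3$) are exactly what the detailed computation yields.
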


From Theorem \ref{thm1} we can deduce our asymptotic result.

\begin{thm}\label{thm2}Let $b<c$ be integers with the same parity. 
Then we have the asymptotic formula
\[\sum_{c<n\leq N}\tau\left((n-b)(n-c)\right)\sim \frac 6 {\pi^2}N\log^2 N, \]
 as $N\rightarrow\infty$.
\end{thm}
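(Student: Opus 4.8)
The plan is to deduce Theorem~\ref{thm2} from Theorem~\ref{thm1} by relating the divisor sum to a counting problem that the function $\rho_{r^2}$ controls. First I would normalize the problem: writing $c-b=2r$ (possible since $b,c$ have the same parity, so $c-b$ is even) and shifting the index, I would replace $n-b$ and $n-c$ by a single variable. The key combinatorial identity is $\tau(mn)=\sum_{d\mid m}\sum_{e\mid n}[\,\gcd(m/d,n/e)=\text{compatible}\,]$ type formula, but a cleaner route is to use the classical expansion
\[
\tau(uv)=\sum_{\substack{d\mid u,\ e\mid v}} 1 \quad\text{adjusted by a }\gcd\text{-condition},
\]
so instead I would use the hyperbola/convolution approach: count pairs of divisors. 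The cleanest bridge is the observation that if we set $P(n)=(n-b)(n-c)$ and substitute $m=n-\tfrac{b+c}{2}$, then $P(n)=m^2-r^2$, reducing the general case to Hooley's shape $\sum \tau(m^2-r^2)$ exactly.

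\textbf{Main steps.} After the substitution $m=n-(b+c)/2$, I would write
\[
\sum_{c<n\leq N}\tau\bigl((n-b)(n-c)\bigr)=\sum_{r<m\leq N'}\tau(m^2-r^2),
\]
where $N'=N-(b+c)/2$, which is $N+\Oc(1)$ and hence does not affect the leading asymptotic. The core of the argument is then to express $\sum_{m\leq N'}\tau(m^2-r^2)$ in terms of the sums $\sum_{\lambda\leq X}\rho_{r^2}(\lambda)$ handled by Theorem~\ref{thm1}. Here I would use the standard identity counting divisors via congruences: for each modulus $\lambda=m^2-r^2$, the number of divisors is detected by solutions of $x^2\equiv r^2\pmod{d}$ as $d$ ranges over an appropriate range, because $d\mid m^2-r^2$ precisely when $m$ is a root of $X^2\equiv r^2\pmod d$. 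Summing $\tau(m^2-r^2)=\sum_{d\mid m^2-r^2}1$ and interchanging the order of summation, the count of $m\leq N'$ with $d\mid m^2-r^2$ is approximately $(N'/d)\,\rho_{r^2}(d)$ on each residue class, which after a second summation over $d$ produces a weighted sum of $\rho_{r^2}$ values. Invoking Theorem~\ref{thm1} in the form $\sum_{\lambda\leq X}\rho_{r^2}(\lambda)\sim (6/\pi^2)X\log X$ and feeding it through a partial-summation (Abel summation) step against the weight $1/d$ yields the extra logarithmic factor, turning $\log N$ into $\log^2 N$.

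\textbf{Anticipated obstacles.} The hardest part will be controlling the error terms introduced by the two divisor ranges. Writing $m^2-r^2$ as a product and applying the Dirichlet hyperbola method, one splits divisors $d\leq \sqrt{m^2-r^2}$ from the complementary large divisors; the small-divisor regime is governed cleanly by $\rho_{r^2}$, but the transition region near $d\approx m$ requires care so that the main terms from the two ranges combine to the single constant $6/\pi^2$ rather than doubling or halving it. I would need to verify that the contribution of $d$ close to $N'$ (the ``diagonal'') is genuinely of lower order, i.e.\ $\Oc(N\log N)$, so that only the bulk contributes to the $N\log^2 N$ term. A secondary technical point is that Theorem~\ref{thm1} is stated as an asymptotic rather than with an explicit error term, so to legitimately apply Abel summation and extract the $\log^2 N$ main term I must arrange the partial-summation argument to depend only on the stated asymptotic $\sum_{\lambda\leq X}\rho_{r^2}(\lambda)\sim(6/\pi^2)X\log X$, treating the error uniformly. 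Once the diagonal is shown to be negligible, the leading constant transfers directly from Theorem~\ref{thm1}, explaining why the main coefficient $6/\pi^2$ is independent of $b$ and $c$ (equivalently of $r$), as the abstract emphasizes.
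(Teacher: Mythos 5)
Your proposal is correct and follows essentially the same route as the paper: completing the square so that $d\mid f(n)$ becomes $x^2\equiv r^2\pmod d$, applying the Dirichlet hyperbola method, interchanging summation to get the main term $2N\sum_{d\leq X}\rho_{r^2}(d)/d$, and then feeding Theorem~\ref{thm1} through Abel summation to produce the $\tfrac{3}{\pi^2}\log^2 N$ factor. The two obstacles you flag (the diagonal region $d\approx\sqrt{f(n)}$ and the lack of an explicit error term in Theorem~\ref{thm1}) are exactly the points the paper handles, the first by bounding the discrepancy by $\Oc\bigl(\sum_{d\leq X}\rho_{r^2}(d)\bigr)\ll N\log N$ via Theorem~\ref{thm1} itself, the second by carrying the $o(\cdot)$ term through the partial summation.
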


Clearly the reducible quadratic polynomials of the type $n^2-r^2$ considered in \eqref{formulaHooley} are covered by Theorem \ref{thm2}, this is the case $c=-b=r>0$. It is very interesting that the constants $A(r)$ in the formula \eqref{formulaHooley} are uniform for $r\geq 1$. This would be no surprise if there is a relation of the function $\rho_{r^2}(d)$ with a certain class number, which would not change if we factor the positive discriminant with a full square. Such a correspondence was described by McKee in \cite{mckee}, \cite{mckee2}, \cite{mckee3}, however, only for square-free discriminants. 

\par Hooley \cite{hooley} suggested one possible way to get to the values $A(r)$ and prove Theorem \ref{thm2}, namely to start from Ingham's work \cite{ingham}. However we follow Dudek's method which relies on a Tauberian theorem. Let for a multiplicative function $\lambda(n)$ we denote the Dirichlet series $D_{\lambda}(s):=\sum_{n=1}^\infty \lambda(n)/n^s$. In order to find the value $A(1)$ Dudek uses information for the function $\rho_{1}(d)$ and then a Tauberian theorem for the Dirichlet series $D_{\rho_1}(s)$. It turns out that all the necessary information for $\rho_{n}(d)$ for any integer $n\geq 1$ can be extracted from section \S4 from Hooley's paper \cite{hooley1}. There Hooley investigates the Dirichlet series $D_{\rho_n}(s)$. Actually his further investigations can also lead to a proof of Theorem \ref{thm2} with an explicit error term. However, we would use these further investigations, more precisely formula (11) from \cite{hooley1}, only in the next part of the present paper where we estimate {\it explicitly} from above the average divisor sum $\sum_{c<n\leq N}\tau\left((n-b)(n-c)\right)$, much in the spirit of our earlier paper \cite{lapkova}. 

\par The motivation to consider also explicit upper bounds for the sum of divisors \eqref{S1} for quadratic polynomials comes from their application in Diophantine sets problems. Let $n\neq 0$ be an integer. A set of $m$ positive integers $\left\{a_1,\ldots , a_m\right\}$ is called a $D(n)$--$m$-tuple if $a_ia_j+n$ is a perfect square for all $i,j$ with $1\leq i<j\leq m$. The classical and most extensively studied type of such sets are the Diophantine sets $D(1)$. In our paper \cite{lapkova} we gave a similar explicit upper bound for the sum \eqref{S1} for an irreducible quadratic polynomial $P(x)$ of certain type, which allowed to improve the maximal possible number of $D(-1)$-quadruples. We believe that in a similar way the upper bounds which will be given by Theorem \ref{thm3} and Corollary \ref{cor4} stated below can be useful for estimating the number of $D(4)$, $D(16)$ or other $D(k^2)$ - sets, which are investigated in a number of papers, e.g. \cite{bliznac_filipin}, \cite{filipin1}, \cite{filipin2}, \cite{filipin3}, \cite{fujita}. 
%

\par We have the following theorem.

\begin{thm}\label{thm3}Let $b<c$ be integers with the same parity and $\delta=(b-c)^2/4$ factor as $\delta=2^t\Omega^2$ for some even $t\geq 0$ and odd $\Omega\geq 1$. Assume that $\sigma_{-1}(\Omega)\leq 4/3$. Let $c^*=\max(1,c+1)$ and $X=\sqrt{f(N)}$. Then for any integer $N\geq c^*$ we have 
\begin{align*}\sum_{c^*\leq n\leq N}\tau\left((n-b)(n-c)\right)&< 2N\left(\frac 3{\pi^2} \log^2 X+\left(\frac 6{\pi^2}+C(\Omega)\right)\log X+C(\Omega)\right)\\
&+2X\left(\frac 6{\pi^2}\log X+C(\Omega)\right)\,,
\end{align*}
where 
$$C(\Omega)=2\sum_{d\mid\Omega}\frac 1 d \left(2\sigma_0(\Omega/d)-1.749 \cdot\sigma_{-1}(\Omega/d)+1.332\right)\,.$$
 
\end{thm}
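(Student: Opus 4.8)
The plan is to combine the Dirichlet hyperbola identity with explicit estimates for the partial sums of $\rho_\delta$ that follow from Hooley's evaluation of the Dirichlet series $D_{\rho_\delta}(s)$. Since $b+c$ is even, the substitution $y=n-(b+c)/2$ is integral and turns $f(n):=(n-b)(n-c)$ into $y^2-\delta$, where $\delta=(b-c)^2/4=\beta^2$ with $\beta=(c-b)/2=2^{t/2}\Omega$; in particular $\Omega$ is the odd part of $\beta$. For a modulus $d$ the congruence $f(n)\equiv 0\pmod d$ is equivalent to $y^2\equiv\delta\pmod d$, so the number of admissible $n$ in a complete residue system modulo $d$ is exactly $\rho_\delta(d)=\rho_{\beta^2}(d)$, the function appearing in Theorem \ref{thm1}.

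Next I would carry out divisor switching. For $c^*\le n\le N$ the value $f(n)$ is positive and at most $f(N)=X^2$, so every divisor $d\mid f(n)$ with $d>\sqrt{f(n)}$ pairs with the complementary divisor $f(n)/d<\sqrt{f(n)}\le X$; dropping the (subtracted) perfect-square correction we get $\tau(f(n))\le 2\,\#\{d\mid f(n):d\le X\}$. Reversing the order of summation yields
\[
\sum_{c^*\le n\le N}\tau(f(n))\le 2\sum_{d\le X}\#\{c^*\le n\le N: d\mid f(n)\}.
\]
Counting the solutions of $y^2\equiv\delta\pmod d$ in the interval, each of the $\rho_\delta(d)$ residue classes contributes at most $N/d+1$ points, so the right-hand side is at most $2N\sum_{d\le X}\rho_\delta(d)/d+2\sum_{d\le X}\rho_\delta(d)$. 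The two groups of terms in the theorem will then come from the two partial-sum estimates $\sum_{d\le X}\rho_\delta(d)/d<\frac{3}{\pi^2}\log^2 X+(\frac{6}{\pi^2}+C(\Omega))\log X+C(\Omega)$ and $\sum_{d\le X}\rho_\delta(d)<X(\frac{6}{\pi^2}\log X+C(\Omega))$, whose leading behaviour matches Theorems \ref{thm1} and \ref{thm2}.

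The heart of the matter, and the main obstacle, is to upgrade these asymptotics to the stated explicit inequalities with the secondary constant $C(\Omega)$. Here I would use formula (11) of \cite{hooley1}, which writes $D_{\rho_\delta}(s)$ as $\zeta(s)^2/\zeta(2s)$ times a correction Euler factor supported on the prime $2$ and on the primes dividing $\Omega$. Expanding this factor exhibits $\rho_\delta$ as a Dirichlet convolution of the $2^{\omega(\cdot)}$-type function (whose Dirichlet series is $\zeta(s)^2/\zeta(2s)$) with a finitely supported multiplicative function indexed by divisors of $\Omega$; this is exactly the origin of the sum $\sum_{d\mid\Omega}$ in $C(\Omega)$. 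Summing term by term over these divisors and inserting explicit Mertens/Dirichlet-hyperbola bounds for $\sum_{m\le Y}1/m$, $\sum_{m\le Y}2^{\omega(m)}/m$ and $\sum_{m\le Y}2^{\omega(m)}$ (whose secondary constants produce the numerical values $1.749$ and $1.332$) gives both partial-sum estimates in closed form. The hypothesis $\sigma_{-1}(\Omega)\le 4/3$ is what bounds the local correction factors at the primes dividing $\Omega$ uniformly, so that the leading coefficients stay pinned at the discriminant-free values $3/\pi^2$ and $6/\pi^2$ and only the lower-order term carries the $\Omega$-dependence. Tracking these local factors and constants through the convolution, rather than any single hard estimate, is the delicate bookkeeping the argument must manage.
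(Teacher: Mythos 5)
Your first two paragraphs reproduce the paper's reduction faithfully: completing the square to identify $\rho_\delta$, the divisor-switching/hyperbola step, and the resulting bound $\sum_{c^*\le n\le N}\tau(f(n))\le 2N\sum_{d\le X}\rho_\delta(d)/d+2\sum_{d\le X}\rho_\delta(d)$ are exactly how the paper begins, and the theorem does follow from the two displayed partial-sum estimates. But those two estimates are the entire content of the theorem --- they carry the specific constant $C(\Omega)$ --- and your third paragraph only asserts them. Moreover, the route you sketch has a concrete obstruction. Factoring $D_{\rho_\delta}(s)=\frac{\zeta^2(s)}{\zeta(2s)}G(s)$ and expanding $G(s)=\sum_m g(m)m^{-s}$ (note that $g$ is supported on \emph{all} integers composed of primes dividing $2\Omega$, so it is not finitely supported: the $2$-adic factor alone contributes infinitely many terms), you would write $\sum_{\lambda\le X}\rho_\delta(\lambda)=\sum_m g(m)\sum_{k\le X/m}2^{\omega(k)}$. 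The coefficients $g(m)$ are \emph{not} all nonnegative: for $p\mid\Omega$ the local factor of $G$ is $A_p(s)\,\frac{1-p^{-s}}{1+p^{-s}}$, whose coefficient at $p^{-s}$ equals $\rho_\delta(p)-2=-1$. So inserting an upper bound for $\sum_{k\le Y}2^{\omega(k)}$ term by term is not legitimate; you would need matching explicit \emph{lower} bounds and control of sums such as $\sum_m|g(m)|$ and $\sum_m|g(m)|m^{-1}\log m$, none of which your plan provides, and the secondary constants so obtained would in any case not be $1.749$ and $1.332$.

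The paper's actual proof is engineered precisely to avoid this sign problem. Its Lemma \ref{lem:Dirichlet} (the recreation of Hooley's formula (11)) organizes $\rho_\delta$ as a convolution with \emph{nonnegative} pieces, $\sum_{\lambda\le X}\rho_\delta(\lambda)=\sum_{d\mid\Omega}\sum_{2^\alpha d^2 h\le X}a_\alpha\, d\,\xi_d(h)$, where $a_\alpha\ge 0$ and $\xi_d=\mu^2*\chi_d\ge 0$ with $\chi_d$ the principal character modulo $2\Omega/d$; only then can one-sided bounds be inserted termwise. The numerical constants come from Ramar\'e's explicit bound $\sum_{l\le x}\mu^2(l)/l\le\frac{6}{\pi^2}\log x+1.166$ combined with elementary counting of $\sum_{m\le Y}\chi_d(m)$ (indeed $1.749=\tfrac32\cdot 1.166$ and $1.332=2\cdot 1.166-1$), not from estimates for $\sum_{m\le Y}2^{\omega(m)}$. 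Finally, your reading of the hypothesis $\sigma_{-1}(\Omega)\le 4/3$ is off: in your factorization the leading coefficient would be $\frac{6}{\pi^2}G(1)=\frac{6}{\pi^2}$ automatically, since $G(1)=1$ holds unconditionally (this is the computation in the proof of Theorem \ref{thm1}); in the paper the hypothesis is needed (i) so that the coefficients $1-\theta_{-1}(2\Omega/d)$ appearing in the termwise bounds are nonnegative, making the insertion of upper bounds legitimate, and (ii) in Lemma \ref{lem:C1} to obtain $C_1(\Omega)\le\frac12$, which is what pins the main term at $\frac{6}{\pi^2}X\log X$. As it stands, then, your proposal establishes the easy reduction but not the stated inequality with the stated $C(\Omega)$.
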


\begin{rem}When $c>0$ and close to $N$, i.e. we have relatively few summands, one can adjust the upper bound by subtracting the negative quantity $2\sum_{d\leq c}(1-c/d)\rho_\delta(d)$. In order to estimate well enough this quantity, however, we need to establish also a strong effective {\it lower} bound of the sum $\sum_{d\leq c}\rho_\delta(d)/d$, a task we do not pursue here. 
\end{rem}
The first important feature of Theorem \ref{thm3} is that under the condition 
\begin{equation}\label{cond}\sigma_{-1}(\Omega)=\sum_{d\mid\Omega}\frac 1 d\leq \frac 4 3
\end{equation}
 we can provide an explicit upper bound with the same main term as in the asymptotic formula from Theorem \ref{thm2}, because $X=N+\Oc(1)$. Second feature of Theorem \ref{thm3} is that it provides bounds for a larger family of quadratic reducible polynomials than the most studied case up to now, this for $P(n)=n^2-1$, which satisfies condition \eqref{cond}. Indeed, an immediate observation is that when $\Omega=1$ we have only one divisor $d=1$, $\sum_{d\mid\Omega}1/d=1 < 4/3$. For the case $\Omega=1$, which also includes the polynomials $P(n)=n^2-4^s$ for integer $s\geq 0$, in particular $P(n)=n^2-1$, we obtain the following corollary.

\begin{cor}\label{cor4}For any integer $N\geq 1$ we have the following claims:
\begin{enumerate}[i)]
\item Let $s$ be a nonnegative integer. Then
$$\sum_{\lambda\leq N} \frac{\rho_{4^s}(\lambda)}{\lambda}< \frac 3{\pi^2}\log^2 N+2.774\cdot \log N +2.166\,.$$
\item 
$$\sum_{n=1}^N \tau(n^2-1)<N\left(\frac 6{\pi^2}\log^2 N+5.548\cdot \log N+4.332\right)\,.$$
\end{enumerate}

\end{cor}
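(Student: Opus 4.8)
The plan is to prove part~(i) first and then obtain part~(ii) from it by the hyperbola method. The key structural fact for part~(i) is that $\rho_{4^s}(\lambda)=\rho_1(\lambda)$ whenever $\lambda$ is odd: since $2^s$ is then a unit modulo $\lambda$, the substitution $x\mapsto 2^{-s}x$ turns $x^2\equiv 4^s$ into $x^2\equiv 1\pmod\lambda$. Thus only the $2$-adic factor of the multiplicative function $\rho_{4^s}$ depends on $s$, which is the reason the final bound is uniform in $s$. I would encode this in the Dirichlet series, writing $D_{\rho_{4^s}}(z)=\sum_{\lambda\geq1}\rho_{4^s}(\lambda)/\lambda^z=\frac{\zeta(z)^2}{\zeta(2z)}\,B_s(z)$, where $B_s(z)$ is the finite Euler factor at $2$ that one reads off from formula~(11) of \cite{hooley1}; the double pole of $\zeta(z)^2$ at $z=1$ is what will produce the $\log^2$ main term.

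To extract $\sum_{\lambda\leq N}\rho_{4^s}(\lambda)/\lambda$ with explicit constants I would use the convolution $\rho_{4^s}=h_s\ast\tau$, where $\tau=\mathbf 1\ast\mathbf 1$ and $h_s$ is the multiplicative function with Dirichlet series $B_s(z)/\zeta(2z)$, whose coefficients decay fast enough that $\sum_a|h_s(a)|/a$ converges and is bounded uniformly in $s$. Then
\[\sum_{\lambda\leq N}\frac{\rho_{4^s}(\lambda)}{\lambda}=\sum_{a}\frac{h_s(a)}{a}\sum_{b\leq N/a}\frac{\tau(b)}{b},\]
into which I would insert the explicit estimate $\sum_{b\leq y}\tau(b)/b=\tfrac12\log^2 y+2\gamma\log y+O(\ldots)$. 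Because $B_s(1)=1$ --- equivalently $\sum_a h_s(a)/a=1/\zeta(2)=6/\pi^2$, which is exactly the $s$-independence of the leading constant already present in Theorem~\ref{thm1} --- the $\log^2 N$ coefficient comes out to $\tfrac12\cdot 6/\pi^2=3/\pi^2$, while the $\log N$ and constant terms collect $\gamma$, $\zeta(2)$ and the finite data of $B_s$. The content is then to bound these numerically, uniformly in $s$, below $2.774$ and $2.166$.

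Part~(ii) follows quickly. For $n\geq 2$ one has $(n-1)^2<n^2-1<n^2$, so $n^2-1$ is never a perfect square and $\tau(n^2-1)=2\,\#\{d:d\mid n^2-1,\ d<\sqrt{n^2-1}\}$; the term $n=1$ contributes $\tau(0)=0$. Swapping the order of summation and rewriting the hyperbola condition $d<\sqrt{n^2-1}$ as $d<n$ gives
\[\sum_{n=1}^N\tau(n^2-1)=2\sum_{d\leq X}\#\{\,n\in(d,N]:n^2\equiv 1\!\!\pmod d\,\},\qquad X=\sqrt{N^2-1}.\]
In the interval $(d,N]$ each of the $\rho_1(d)$ admissible residue classes modulo $d$ contains strictly fewer than $N/d$ integers (for $d\geq 2$ every root is nonzero; the class $0$ occurs only for $d=1$ and is counted directly), so the inner count is at most $\tfrac{N}{d}\rho_1(d)$ and hence $\sum_{n=1}^N\tau(n^2-1)\leq 2N\sum_{d\leq X}\rho_1(d)/d$. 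Since $4^0=1$, part~(i) bounds the last sum; finally $X=\sqrt{N^2-1}<N$ lets me replace $X$ by $N$ by monotonicity, and multiplying through by $2N$ turns $3/\pi^2,\,2.774,\,2.166$ into the claimed $6/\pi^2,\,5.548,\,4.332$.

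I expect the main obstacle to be entirely in part~(i): the whole difficulty is making the partial-summation remainder and, above all, the $2$-adic correction factor $B_s(z)$ explicit and uniform in $s$, so that the numerical constants fall below $2.774$ and $2.166$. By contrast part~(ii) is bookkeeping once the clean bound $\#\{n\in(d,N]:n\equiv r\}<N/d$ is in hand; the one point worth noting is that tightening the range from $[1,N]$ to $(d,N]$ is precisely what removes the extra $2X(\cdots)$ error term that appears in the general Theorem~\ref{thm3}.
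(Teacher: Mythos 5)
Your part (ii) is correct and is essentially the paper's own argument: the hyperbola method with the divisor cut at $d<n$, the observation that for $d\geq 2$ the roots of $x^2\equiv 1\pmod d$ are nonzero so each admissible class meets $(d,N]$ in fewer than $N/d$ integers, hence $\sum_{n\leq N}\tau(n^2-1)\leq 2N\sum_{d\leq N}\rho_1(d)/d$, and then part (i) with $s=0$ doubles the constants. The paper phrases this as $2\sum_{d\leq N}\bigl(M(N,d)-M(d,d)\bigr)$ with $M(d,d)=\rho_1(d)$, which is exactly the same cancellation of the $+\rho_1(d)$ error that your interval $(d,N]$ achieves.

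The genuine gap is in part (i), and it is precisely the step you defer. The statement is an explicit inequality with the specific constants $2.774$ and $2.166$, required for every $N\geq 1$ and uniformly in $s$; your proposal sets up $\rho_{4^s}=h_s\ast\tau$ with $D_{h_s}(z)=B_s(z)/\zeta(2z)$ and then says the content is "to bound these numerically" below those constants --- but that numerical content \emph{is} the theorem, and nothing in the proposal shows it can be met. Worse, your decomposition makes it strictly harder than necessary: since $h_s$ carries the factor $1/\zeta(2z)$ (and $1-2^{-z}$), its coefficients change sign, so an \emph{upper} bound for $\sum_{\lambda\leq N}\rho_{4^s}(\lambda)/\lambda$ forces \emph{two-sided} explicit estimates for $\sum_{b\leq y}\tau(b)/b$, valid down to small $y$ where such estimates are weakest, together with explicit, $s$-uniform control of $\sum_a|h_s(a)|/a$, $\sum_a|h_s(a)|\log a/a$ and the tails beyond $N$. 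The paper avoids all of this by choosing the decomposition $\rho_{4^s}=(a_\alpha)\ast\mu^2\ast\chi_1$, in which every factor is nonnegative ($\chi_1$ being the principal character modulo $2$), so only one-sided bounds are needed: $\sum_{m\leq Y}\chi_1(m)\leq (Y+1)/2$, Ramar\'e's explicit $\sum_{l\leq Y}\mu^2(l)/l\leq \tfrac{6}{\pi^2}\log Y+1.166$, the trivial $\sum_{l\leq Y}\mu^2(l)\leq Y$, and $\sum_\alpha a_\alpha 2^{-\alpha}=2$; this gives $\sum_{\lambda\leq X}\rho_{4^s}(\lambda)<\tfrac{6}{\pi^2}X\log X+2.166\,X$ and, after Abel summation, the corollary. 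Note that the target numbers are artifacts of exactly this route ($2.166=1+1.166$ and $2.774$ is a decimal rounding of $6/\pi^2+2.166$), so with a different decomposition you would first have to compute what constants your method actually produces and only then could you compare them with the claimed ones; as written, your part (i) is a plausible program rather than a proof.
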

Previous explicit upper bounds for the average number of divisors of $P(n)=n^2-1$ were obtained by Elsholtz, Filipin and Fujita \cite{elsh} with $A(1)\leq 2$. Trudgian \cite{trud} improved this to $A(1)\leq 12/\pi^2$, Cipu \cite{cipu} got $A(1)\leq 9/\pi^2$  and very recently Cipu and Trudgian \cite{cipu-trud} also achieved the best leading coefficient $A(1)\leq 6/\pi^2$ using different method than ours. The main goal of all these papers is to bound the maximal possible number of Diophantine quintuples. In another recent paper, estimating the number of $D(4)$-quintuples, Bliznac and Filipin \cite{bliznac_filipin} showed that $A(2)\leq 6/\pi^2$.
%
%
%


\section{Asymptotic formula}

In \cite{dudek} Dudek uses the following Tauberian theorem (Theorem 2.4.1 in \cite{coj-murty}).

\begin{lem}\label{lem1} Let $\displaystyle F(s)=\sum_{n=1}^\infty\frac{a_n}{n^s}$ be a Dirichlet series with non-negative coefficients converging for $\Re(s)>1$. Suppose that $F(s)$ extends analytically at all points on $\Re(s)=1$ apart from $s=1$, and that at $s=1$ we can write
\[F(s)=\frac{H(s)}{(s-1)^{1-\alpha}}\]
for some $\alpha\in\R$ and some $H(s)$ holomorphic in the region $\Re(s)\geq 1$ and non-zero there. Then 
\[\sum_{n\leq x}a_n\sim\frac{\gamma x}{(\log x)^\alpha}\]
with 
\[\gamma:=\frac{H(1)}{\Gamma(1-\alpha)}\,,\]
where $\Gamma$ is the usual Gamma function.
\end{lem}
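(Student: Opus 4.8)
The statement is a Wiener--Ikehara--Delange Tauberian theorem, and I would prove it by the standard analytic route for such results. Write $A(x):=\sum_{n\le x}a_n$; since the coefficients are non-negative, $A$ is non-decreasing, and the substitution $x=e^u$ recasts the Dirichlet series as a Laplace transform,
\[ F(s)=s\int_1^\infty A(x)\,x^{-s-1}\,dx=s\int_0^\infty A(e^u)\,e^{-su}\,du, \]
valid for $\Re(s)>1$. The desired conclusion $A(x)\sim\gamma x(\log x)^{-\alpha}$ is equivalent to $A(e^u)\sim\gamma e^u u^{-\alpha}$, so everything reduces to reading off the boundary behaviour of this transform as $\Re(s)\downarrow 1$.

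The value of $\gamma$ is forced by comparison with the model density. Using
\[ \int_0^\infty e^{-(s-1)u}u^{-\alpha}\,du=\Gamma(1-\alpha)(s-1)^{-(1-\alpha)}, \]
one sees that the Ansatz $A(e^u)=\gamma e^u u^{-\alpha}$ would produce $F(s)\sim\gamma\,\Gamma(1-\alpha)(s-1)^{-(1-\alpha)}$ as $s\to1$. Matching this against the hypothesis $F(s)=H(s)(s-1)^{-(1-\alpha)}$ with $H(1)\ne0$ gives $\gamma\,\Gamma(1-\alpha)=H(1)$, i.e. $\gamma=H(1)/\Gamma(1-\alpha)$. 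The content of the theorem is that this formal matching is rigorous.

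The crux is to justify the passage from the analytic information about $F$ to the asymptotics of $A$. The only hypotheses available are the continuous extension of $F$ to the line $\Re(s)=1$ away from $s=1$ and the prescribed type of singularity at $s=1$; in particular we are given \emph{no} growth control on $F(s)$ as $|\Im(s)|\to\infty$, which rules out a naive Perron-formula contour shift. This is precisely the regime handled by the Wiener--Ikehara method. I would subtract the model singularity, so that $F(s)-\gamma\,\Gamma(1-\alpha)(s-1)^{-(1-\alpha)}$ extends continuously to $\Re(s)=1$, and then convolve the associated boundary density against a Fej\'er-type kernel whose Fourier transform has compact support. The compact support confines all the boundary data to a bounded segment of the line, where mere continuity suffices, and the Riemann--Lebesgue lemma annihilates the smoothed remainder.

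It remains to undo the smoothing, and this monotonicity step is the main obstacle. Because $A$ is non-decreasing, a smoothed asymptotic for the averages of $A(e^u)e^{-u}u^{\alpha}$ can be upgraded to a genuine pointwise asymptotic by squeezing $A$ between its averages over short adjacent intervals and then letting the smoothing width shrink after taking the limit in $u$; it is exactly here that the non-negativity of the $a_n$ is indispensable, the theorem being false without it. Combining the three ingredients yields $A(e^u)\sim\gamma e^u u^{-\alpha}$ with $\gamma=H(1)/\Gamma(1-\alpha)$, which is the claimed formula. In the present paper this result is quoted directly as Theorem~2.4.1 of \cite{coj-murty} and used as a black box in the subsequent application.
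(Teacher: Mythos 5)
The paper offers no proof of this lemma at all: it is quoted as Theorem 2.4.1 of \cite{coj-murty} and used as a black box, a fact you correctly note in your last sentence. So your sketch can only be measured against the standard literature proof, and in outline it follows exactly that route: the Laplace-transform reformulation $F(s)=s\int_0^\infty A(e^u)e^{-su}\,du$, the determination of $\gamma$ by matching against $\int_0^\infty e^{-(s-1)u}u^{-\alpha}\,du=\Gamma(1-\alpha)(s-1)^{-(1-\alpha)}$, Fej\'er-kernel smoothing with compactly supported Fourier transform (which is why no growth hypothesis on $F$ in the imaginary direction is needed), Riemann--Lebesgue for the remainder, and a monotonicity argument to remove the smoothing. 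Your identification of where non-negativity is indispensable is also accurate.

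There is, however, one step that fails as written. You assert that $F(s)-\gamma\,\Gamma(1-\alpha)(s-1)^{-(1-\alpha)}$ extends continuously to $\Re(s)=1$. Since $\gamma\,\Gamma(1-\alpha)=H(1)$, this difference equals $\bigl(H(s)-H(1)\bigr)(s-1)^{-(1-\alpha)}$, which behaves like $H'(1)(s-1)^{\alpha}$ near $s=1$: continuous for $\alpha\geq 0$, but divergent for $\alpha<0$ --- and the present paper applies the lemma precisely with $\alpha=-1$, where $F(s)=H(s)/(s-1)^2$ and your one-term subtraction still leaves a simple pole of residue $H'(1)$. The standard repair is to subtract the full principal part, i.e.\ the terms $\frac{H^{(k)}(1)}{k!}(s-1)^{k-(1-\alpha)}$ for $0\leq k<1-\alpha$, each paired with its model density proportional to $e^u u^{-\alpha-k}$, and to observe that the terms with $k\geq 1$ contribute only $o\bigl(x(\log x)^{-\alpha}\bigr)$ to the counting function; without this, the smoothing/Riemann--Lebesgue step has no continuous boundary function to act on. A smaller caveat: your displayed Gamma integral converges at $u=0$ only for $\alpha<1$, and for $\alpha\in\{1,2,\dots\}$ the constant $\gamma$ degenerates to $0$ via the poles of $\Gamma$, so the sketch does not cover the full stated range $\alpha\in\R$ without separate interpretation. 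Neither issue affects the paper's use of the lemma, but the subtraction step is a genuine gap in the proposal as a proof of the statement at the generality claimed.
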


The key information which we need in order to extend the result of \cite{dudek} is contained in the following lemma. We write $p^\alpha||n$ when $p^\alpha\mid n$ but $p^{\alpha+1}\nmid n$. 
%

\begin{lem}\label{lem2}
Fix the integer parameter $r\geq 1$ and write simply $\rho(d)$ for the function $\rho_{r^2}(d)$. Let $\beta, t\geq 0$ be such that $p^\beta||r^2$ for $p>2$,  $2^t||r^2$ and $2\beta':=\beta$, $2t':=t$. Then for the value of the function $\rho(p^\alpha)$ at prime powers we have the following cases.
\begin{enumerate}[(i)]
\item{$p\nmid 2r$}
$$\rho(p^\alpha)=2,\text{ if   }\alpha\geq 1\,.$$
\item{$p\mid r, p\neq 2$}

$$\rho(p^\alpha)=
\begin{cases}
p^{[\frac 1 2\alpha]}&,\text{ if   } \alpha\leq\beta\,,\\
2p^{\beta'} &,\text{ if   } \alpha>\beta\,.
\end{cases}$$

\item{$p=2$}

$$\rho(2^\alpha)=
\begin{cases}
2^{[\frac 1 2\alpha]}&,\text{ if   } \alpha\leq t\,,\\
2^{t'} &,\text{ if   } \alpha=t+1\,,\\
2^{t'+1} &,\text{ if   } \alpha=t+2\,,\\
2^{t'+2} &,\text{ if   } \alpha>t+2\,.\\
\end{cases}$$

\end{enumerate}
\end{lem}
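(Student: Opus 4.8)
The quantity $\rho(p^\alpha)=\rho_{r^2}(p^\alpha)$ counts solutions of $x^2\equiv r^2\pmod{p^\alpha}$, so the plan is to analyze this congruence prime by prime. First I would rewrite the congruence as $p^\alpha\mid (x-r)(x+r)$ and split into the three cases according to how $p$ divides $2r$. The underlying tools are elementary: for odd $p$ the multiplicative group $(\Z/p^\alpha\Z)^\times$ is cyclic, so the equation $y^2\equiv a\pmod{p^\alpha}$ for a unit $a$ has exactly $1+\Leg{a}{p}$ solutions when $p\nmid a$, and Hensel's lemma controls lifting; for $p=2$ the group structure is $\Z/2\times\Z/2^{\alpha-2}$, which is the source of the extra cases.

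For case (i), $p\nmid 2r$: here $r$ is a unit mod $p^\alpha$, so $r^2$ is a nonzero quadratic residue, and the count $1+\Leg{r^2}{p}=2$ follows immediately for every $\alpha\geq 1$. For case (ii), $p\mid r$ and $p$ odd: I would write $x=p^j u$-type reductions or, more cleanly, factor $p^\alpha\mid (x-r)(x+r)$ and note that since $p$ is odd it cannot divide both $x-r$ and $x+r$ unless $p\mid r$; the valuation $v_p(r)=\beta'$ then forces the solutions to distribute according to how the required power $p^\alpha$ splits between the two factors. Counting these carefully gives $p^{[\alpha/2]}$ in the range $\alpha\leq\beta=2\beta'$ (where solutions come from $x\equiv 0$ modulo a suitable power of $p$) and the stable value $2p^{\beta'}$ once $\alpha>\beta$, reflecting that beyond the point where $p^\beta\|r^2$ no further constraint tightens. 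I expect case (iii), $p=2$, to be the main obstacle: the congruence $x^2\equiv r^2\pmod{2^\alpha}$ must be handled through the non-cyclic structure of $(\Z/2^\alpha\Z)^\times$, and the four subcases ($\alpha\leq t$, $\alpha=t+1$, $\alpha=t+2$, $\alpha>t+2$) arise precisely because the number of square roots of a unit mod $2^\alpha$ jumps from $1$ to $2$ to $4$ as $\alpha$ grows past $1,2,3$. The transition values $2^{t'},2^{t'+1},2^{t'+2}$ encode exactly this doubling, combined with the factor $2^{t'}$ coming from $v_2(r)=t'$.

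Concretely, for $p=2$ I would substitute $x=2^{t'}y$ when $\alpha\leq t$ to reduce to a smaller congruence and obtain $2^{[\alpha/2]}$, and for $\alpha\geq t+1$ reduce $x^2\equiv r^2\pmod{2^\alpha}$ after dividing out the common power $2^t$ to a congruence $z^2\equiv(\text{odd})^2\pmod{2^{\alpha-t}}$, then invoke the standard count of $\gcd(4,2^{\alpha-t})\cdot(\text{something})$ for square roots of an odd square modulo a power of two. The bookkeeping of how many solutions each reduced solution lifts back to is the delicate part, and matching the lifting multiplicity against the stated values $2^{t'}$, $2^{t'+1}$, $2^{t'+2}$ is where I would be most careful. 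Throughout, the statement is purely local at each prime, so no global assembly is needed for this lemma; the multiplicative combination across primes is deferred to the later computation of $D_\rho(s)$.
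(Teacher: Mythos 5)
Your plan is mathematically sound, but it takes a genuinely different route from the paper: the paper does not prove Lemma~\ref{lem2} from scratch at all --- it simply cites \S 4 of Hooley's 1958 paper \cite{hooley1} (his cases $(a)$, $(d)$ and $(e)$), where the values of $\rho_n(p^\alpha)$ are computed for a general parameter $n$, adding only the remark that the square-freeness hypothesis imposed in Hooley's theorems is not needed in that section. Your proposal instead reconstructs the local computation directly: for $p\nmid 2r$ the unit/Hensel count gives $2$; for odd $p\mid r$ and $\alpha\leq\beta$ the congruence degenerates to $x^2\equiv 0\pmod{p^\alpha}$, whose solutions are the multiples of $p^{\lceil\alpha/2\rceil}$, giving $p^{[\alpha/2]}$, while for $\alpha>\beta$ a valuation argument forces $v_p(x)=\beta'$ and reduces to the unit case with lifting multiplicity $p^{\beta'}$; for $p=2$ the same reduction combined with the $1$--$2$--$4$ pattern for square roots of an odd square modulo $2$, $4$, and $2^k$ with $k\geq 3$ produces exactly the four stated subcases. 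In substance you are re-deriving the cited result of Hooley; what your route buys is a self-contained lemma with no dependence on the literature, and what the paper's route buys is brevity. One small correction to your sketch: in the subcase $\alpha\leq t$ of case (iii), the substitution $x=2^{t'}y$ is not the right reduction --- the solutions of $x^2\equiv 0\pmod{2^\alpha}$ are precisely the multiples of $2^{\lceil\alpha/2\rceil}$, not of $2^{t'}$ (for instance $\alpha=2$, $t=4$ has the solution $x\equiv 2\pmod 4$, which is not divisible by $2^{t'}=4$); the count $2^{[\alpha/2]}$ you state is nevertheless correct, and the same corrected observation is what you need for $\alpha\leq\beta$ in case (ii).
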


\begin{proof} The lemma follows from section \S 4 of Hooley's paper \cite{hooley1}, more precisely his cases $(a)$, $(d)$ and $(e)$. There the values of the function $\rho_n(p^\alpha)$ are examined for a general integer parameter $n$. Note that the condition that $n$ is square-free, which is imposed in the theorems of \cite{hooley1}, is not required in \S 4 \cite{hooley1}.\end{proof}

\begin{proof}[Proof of Theorem \ref{thm1}]

Let us fix the integer parameter $r\geq 1$. Consider the function
$$F(s)=\sum_{\lambda=1}^\infty\frac{\rho(\lambda)}{\lambda^s}\,.$$
Clearly $\rho(\lambda)\leq\lambda$, hence the Dirichlet series $F(s)$ is absolutely convergent for $\Re(s)>2$ and we can write it as an Euler product
\begin{equation}\label{Drho}F(s)=\prod_p \left(1+\frac{\rho(p)}{p^s}+\frac{\rho(p^2)}{p^{2s}}+\ldots\right)=:\prod_p A_p(s)\,.
\end{equation}
From the following computations it will become clear that $F(s)$ is absolutely convergent for $\Re(s)>1$.\\

According to Lemma \ref{lem2} for the factors $A_p(s)$ we obtain the following cases.
\begin{enumerate}[(i)]
\item{$p\nmid 2r$.}  Then 
\begin{equation}\label{eq:Ap1} A_p(s)=1+\frac 2{p^s}+\frac 2{p^{2s}}+\ldots=\frac{1+p^{-s}}{1-p^{-s}}\,.
\end{equation}

\item{$p\mid r, p\neq 2$.} Then we get
\begin{align}\label{eq:Ap2}A_p(s)&=1+\frac 1{p^s}+\frac p{p^{2s}}+\frac p{p^{3s}}+\ldots+\frac{p^{\beta'-1}}{p^{(\beta-2)s}}+\frac{p^{\beta'-1}}{p^{(\beta-1)s}}+\frac{p^{\beta'}}{p^{\beta s}}\left(1+\frac 2{p^s}+\frac 2{p^{2s}}+\ldots\right)\nonumber\\
&=\left(1+\frac 1{p^s}\right)\left(1+\frac{p}{p^{2s}}+\ldots+\frac{p^{\beta'-1}}{p^{(\beta-2)s}}\right)+\frac{p^{\beta'}}{p^{\beta s}}\cdot\frac{1+p^{-s}}{1-p^{-s}}\,.
\end{align}

\item{$p=2$.} In this case we have
\begin{align}\label{eq:A2}A_2(s)&=1+\frac 1{2^s}+\frac 2{2^{2s}}+\frac 2{2^{3s}}+\ldots+\frac {2^{t'}}{2^{ts}}+\frac {2^{t'}}{2^{(t+1)s}}+\frac {2^{t'+1}}{2^{(t+2)s}}+\frac {2^{t'+2}}{2^{(t+3)s}}\left(1+\frac 1{2^s}+\frac 1{2^{2s}}+\ldots\right)\nonumber\\
&=\left(1+\frac 1{2^s}\right)\left(1+\frac 2{2^{2s}}+\ldots+\frac{2^{t'}}{2^{ts}}\right)+\frac{2^{t'+1}}{2^{(t+2)s}}+\frac{2^{t'+2}}{2^{(t+3)s}}\cdot\frac{1}{1-2^{-s}}
\end{align}
\end{enumerate}

We use \eqref{eq:Ap1} and the fact that for $\Re(s)>1$
$$\frac{\zeta^2(s)}{\zeta(2s)}=\prod_p \frac{1-p^{-2s}}{(1-p^{-s})^2}=\prod_p\frac{1+p^{-s}}{1-p^{-s}}\,,$$
so we can write
$$F(s)=\frac{\zeta^2(s)}{\zeta(2s)}\cdot A_2(s)\cdot\frac{1-2^{-s}}{1+2^{-s}}\cdot\prod_{\substack{p\mid r\\p\neq 2}}A_p(s)\cdot\frac{1-p^{-s}}{1+p^{-s}}=:\frac{\zeta^2(s)}{\zeta(2s)}\cdot G(s)\,.$$

It is clear that $G(s)$ is holomorphic function in the half-plane $\Re(s)\geq 1$, though it is not obvious that it is non-zero there. By Lemma \ref{lem:nonzero} below it follows that this is indeed true, because the finitely many factors $A_p(s)$ for $p=2$ and $p\neq 2$ but $p\mid r$ are non-zero for $\Re(s)\geq 1$. Then $F(s)$ fulfills the conditions of Lemma \ref{lem1}, with $\alpha=-1$, so we obtain 
\begin{equation}\label{eq:asympR}\sum_{\lambda\leq N}\rho(\lambda)\sim \gamma N\log N
\end{equation}
with 
\begin{equation}\label{eq:c} \gamma=\lim_{s\rightarrow 1}(s-1)^2F(s)=\frac 1{\zeta(2)}G(1)\,.
\end{equation}

By \eqref{eq:Ap2} in the case $p\mid r, p\neq 2$ we have
\begin{align}\label{eq:Ap1_1}A_p(1)&=\left(1+\frac 1{p}\right)\left(1+\frac{p}{p^{2}}+\ldots+\frac{p^{\beta'-1}}{p^{\beta-2}}\right)+\frac{p^{\beta'}}{p^{\beta}}\cdot\frac{1+p^{-1}}{1-p^{-1}}\nonumber\\
&=\left(1+p^{-1}\right)\left(1+\frac{1}{p}+\frac{1}{p^2}+\ldots+\frac{1}{p^{\beta'-1}}\right)+\frac{1}{p^{\beta'}}\cdot\frac{1+p^{-1}}{1-p^{-1}}\nonumber\\
&=\left(1+p^{-1}\right)\cdot\frac{1-p^{-\beta'}}{1-p^{-1}}+\frac{1}{p^{\beta'}}\cdot\frac{1+p^{-1}}{1-p^{-1}}=\frac{1+p^{-1}}{1-p^{-1}}\left(1-p^{-\beta'}+p^{-\beta'}\right)\nonumber\\
&=\frac{1+p^{-1}}{1-p^{-1}}\,.
\end{align}

By \eqref{eq:A2} in the case $p=2$ we get
\begin{align}\label{eq:A2_1}A_2(1)&= \left(1+\frac 1{2}\right)\left(1+\frac 1{2}+\ldots+\frac{1}{2^{t'}}\right)+\frac{1}{2^{t'+1}}+\frac{1}{2^{t'+1}}\cdot\frac{1}{1-2^{-1}}  \nonumber\\
&=\left(1+2^{-1}\right)\cdot\frac{1-2^{-(t'+1)}}{1-2^{-1}}+2^{-(t'+1)}+2^{-t'}=3-3\cdot 2^{-(t'+1)}+2^{-(t'+1)}+2^{-t'}=3\,.
\end{align}

Now from \eqref{eq:Ap1_1} and \eqref{eq:A2_1} plugged in the definition of $G(s)$ it follows that $G(1)=1$. From \eqref{eq:c} it follows that $\gamma=1/\zeta(2)$ and Theorem \ref{thm1} follows from \eqref{eq:asympR}.
\end{proof}

\begin{lem}\label{lem:nonzero} If $p=2$ or $p\mid r$ the functions $A_p(s)$ have no zeros in the half-plane $\Re(s)\geq 1$. 
\end{lem}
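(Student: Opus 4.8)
The plan is to prove the lemma by treating the finitely many factors $A_p(s)$ with $p=2$ or $p\mid r$ one at a time. Throughout I set $z=p^{-s}$, so that on $\Re(s)\ge 1$ we have $|z|=p^{-\Re(s)}\le 1/p\le 1/2<1$; in particular $1+z\neq 0$ and every factor $1/(1-z)$ occurring below is holomorphic on $|z|\le 1/p$. I view each $A_p(s)$ as a power series $\sum_{k\ge 0}c_kz^k$ with non-negative coefficients and constant term $c_0=\rho(1)=1$, so the goal reduces to showing $A_p\neq 0$ for $|z|\le 1/p$. The basic device is the diagonal-dominance estimate $|A_p|\ge 1-\sum_{k\ge 1}c_k|z|^k$, whose right-hand side I control by evaluating the majorant $\sum_{k\ge1}c_k\,p^{-k}$, that is, the function value of the all-non-negative series at the real point $z=1/p$.

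For an odd prime $p\mid r$ I would first pull the common factor $1+z$ out of \eqref{eq:Ap2}, writing $A_p(s)=(1+z)B_p(s)$ with
\[
B_p(s)=\sum_{j=0}^{\beta'-1}(pz^2)^j+\frac{(pz^2)^{\beta'}}{1-z}\,.
\]
This $B_p$ again has non-negative coefficients and constant term $1$, and substituting $z=1/p$ (so $pz^2=1/p$) telescopes exactly as in \eqref{eq:Ap1_1} to give $B_p=p/(p-1)$, hence $\sum_{k\ge1}c_k\,p^{-k}=1/(p-1)\le 1/2$. Since $1/2<1$, for $|z|\le 1/p$ the triangle inequality yields $|B_p|\ge 1-1/(p-1)\ge 1/2>0$, so $A_p=(1+z)B_p\neq 0$. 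Every odd prime is thus settled with room to spare.

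The prime $p=2$ is the genuine difficulty, and I expect it to be the main obstacle precisely because the analogous estimate is borderline. Starting from \eqref{eq:A2}, I would combine its last two terms via $2^{t'+1}z^{t+2}+2^{t'+2}z^{t+3}/(1-z)=2^{t'+1}z^{t+2}(1+z)/(1-z)$, which again exposes the factor $1+z$ and gives $A_2(s)=(1+z)C(s)$ with
\[
C(s)=\sum_{j=0}^{t'}(2z^2)^j+\frac{2^{t'+1}z^{t+2}}{1-z}\,.
\]
Here $C$ has non-negative coefficients and constant term $1$, but evaluating the majorant at $z=1/2$ gives $C=2$ (a telescoping identical to \eqref{eq:A2_1}), so $\sum_{k\ge1}c_k\,2^{-k}=1$ and crude diagonal dominance yields only $|C|\ge 0$. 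To upgrade to a strict inequality I would argue that a zero $C(z_0)=0$ with $|z_0|\le 1/2$ forces $\sum_{k\ge1}c_kz_0^{\,k}=-1$; taking absolute values and comparing with $\sum_{k\ge1}c_k\,2^{-k}=1$ forces equality throughout the triangle inequality, which requires both $|z_0|=1/2$ and that all nonzero terms $c_kz_0^{\,k}$ share the argument $\pi$. But the tail $2^{t'+1}z^{t+2}/(1-z)$ contributes the two consecutive powers $z^{t+2},z^{t+3}$ with positive coefficients, so $(t+2)\theta\equiv(t+3)\theta\equiv\pi\pmod{2\pi}$ with $\theta=\arg z_0$ would give $\theta\equiv 0$, contradicting $(t+2)\theta\equiv\pi$. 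Hence no such $z_0$ exists, $C\neq 0$ on $|z|\le 1/2$, and therefore $A_2=(1+z)C\neq 0$. Assembling the three cases proves the lemma.
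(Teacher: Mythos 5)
Your proof is correct, but it takes a genuinely different route from the paper's, and in two places it is tighter. The paper splits into three cases: for $p\geq 5$ it applies diagonal dominance directly to $A_p(s)$ using $\rho(p^\alpha)\leq p^{\alpha/2}$; for $p=3$ and $p=2$ it assumes a zero, rearranges the closed form of $A_p(s)$ into an equation, and derives a contradiction by comparing absolute values of the two sides --- with the $p=2$ case requiring an extra case analysis in $t'$ (only $t'\geq 3$ is written out, while $t'\in\{0,1,2\}$ is left as ``similar''). You instead factor $(1+z)$, $z=p^{-s}$, out of every factor first. For odd $p\mid r$ this makes the non-negative majorant of the cofactor $B_p$ telescope to $p/(p-1)$, so the tail is at most $1/(p-1)\leq 1/2$ and all odd primes --- including $p=3$, which the paper must treat separately --- are dispatched at once by diagonal dominance. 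For $p=2$ your majorant is exactly borderline (the tail sums to $1$), and your key new idea is to exploit the equality case of the triangle inequality: a zero $z_0$ would force $|z_0|=1/2$ and every term $c_kz_0^k$ with $c_k>0$ to have argument $\pi$, which is impossible because the geometric tail contributes positive coefficients at the two consecutive exponents $t+2$ and $t+3$, giving $\theta\equiv 0$ and $(t+2)\theta\equiv\pi \pmod{2\pi}$ simultaneously. This rigidity argument is uniform in $t'$, so it also covers the cases the paper only sketches. In short, the paper's proof is a sequence of concrete estimates on specific rearranged equations, while yours is a single structural argument (factor out $1+z$, then diagonal dominance plus phase rigidity); yours is arguably cleaner, and it is complete precisely where the paper defers details.
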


\begin{proof}
We will verify the claim in elementary way with few cases to consider. First, we observe that from Lemma \ref{lem2} it follows that for $p\mid r$ and $p\geq 5$ we have $\rho(p^\alpha)\leq p^{\alpha/2}$ for every integer $\alpha\geq 1$. Let us write $\Re(s)=\sigma$, with $\sigma\geq 1$. Then we easily obtain
\begin{align*}
 \left|A_p(s)\right|&=\left|\sum_{\alpha=0}^\infty\frac{\rho(p^\alpha)}{p^{\alpha s}}\right|\geq 1-\left|\sum_{\alpha=1}^\infty\frac{\rho(p^\alpha)}{p^{\alpha s}}\right|\geq 1-\sum_{\alpha=1}^\infty\frac{|\rho(p^\alpha)|}{p^{\alpha \sigma}}\\
 &\geq 1-\sum_{\alpha=1}^\infty\frac{p^{\alpha/2}}{p^{\alpha \sigma}}=1-\frac{p^{-(\sigma-1/2)}}{1-p^{-(\sigma-1/2)}}>0\,.
\end{align*}

When $p=3$ and $p\mid r$, let us assume that $A_3(s)=0$. In this case 
$$A_3(s)=(1+3^s)\sum_{\alpha=0}^{\beta'-1}\frac{3^\alpha}{3^{2\alpha s}}+\frac{3^{\beta'}}{3^{\beta s}}\cdot\frac{1+3^{-s}}{1-3^{-s}}=(1+3^s)\left(\frac{1-3^{-\beta'(2s-1)}}{1-3^{-(2s-1)}}+\frac{3^{-\beta'(2s-1)}}{1-3^{-s}}\right)$$
and the expression in the second brackets should be zero, therefore
\begin{equation}\label{eq:zero3}
\frac{1-3^{-s}}{1-3^{-(2s-1)}}=\frac{3^{-\beta'(2s-1)}}{3^{-\beta'(2s-1)}-1}\,.
\end{equation}
Using triangle inequalities for the absolute values of both sides of \eqref{eq:zero3} and the simple fact $3^{-\sigma}\leq 1/3$ for $\sigma\geq 1$, we see that the absolute value of the expression on the left-hand side of \eqref{eq:zero3} is at least $1/2$, while the absolute value of the expression on the right-hand side of \eqref{eq:zero3}  is less than $1/2$ when $\beta'>1$ - a contradiction. When $\beta'=1$ \eqref{eq:zero3} gives $1=3^{-s}+3^{-(2s-1)}$ which has no solutions for $\sigma\geq 1$, again by a simple comparison of the absolute values. Thus the assumption that $A_3(s)=0$ when $\sigma\geq 1$ is wrong.

It remains to check the case $p=2$. Assume that $A_2(s)=0$ for $\sigma\geq 1$. Then from
$$A_2(s)=\frac{1+2^{-s}}{1-2^{-(2s-1)}}\left(1-2^{-(t'+1)(2s-1)}\right)+2^{-(t'+1)(2s-1)}\frac{2^s+1}{2^s-1}$$
we necessarily have
\begin{equation}\label{eq:zero2} \frac{1+2^{-s}}{1-2^{-(2s-1)}}\cdot\frac{2^s-1}{2^s+1}=\frac{2^{-(t'+1)(2s-1)}}{2^{-(t'+1)(2s-1)}-1}\,.
\end{equation}
The left-hand side of \eqref{eq:zero2} does not depend on $t'$ and we can see, again using triangle inequalities and the simple fact that $2^{-\sigma}\leq 1/2$ and $2^{-(2\sigma-1)}\leq 1/2$, that its absolute value is at least $1/9$. On the other hand, the right-hand side of \eqref{eq:zero2} equals $1/(2^{(t'+1)(2s-1)}-1)$ and its absolute value is at most $1/(2^{t'+1}-1)\leq 1/15$ for $t'\geq 3$. This gives $1/9\leq 1/15$ - a contradiction. The cases $t'\in\left\{0,1,2\right\}$ can be dealt in a similar way, substituting the corresponding value of $t'$ in \eqref{eq:zero2}, and then arranging the expressions in an equation with absolute values on the two sides which cannot be equal. Thus the assumption  $A_2(s)=0$ for $\sigma\geq 1$ is wrong and this completes the proof of the Lemma.
\end{proof}


\begin{proof}[Proof of Theorem \ref{thm2}]
Let us write $f(n):=(n-b)(n-c)$ and 
\begin{equation}\label{def:S} S(N):=\sum_{c<n\leq N}\tau\left(f(n)\right)=\sum_{c<n\leq N}\sum_{d\mid f(n)}1\,.
\end{equation}
Let $X=\sqrt{f(N)}$. Note that $f(n)$ is positive for $n>c$ and increasing. By Dirichlet hyperbola method we have
$$S(N)=\sum_{c<n\leq N}\left(2\sum_{\substack{d\leq \sqrt{f(n)}\\d\mid f(n)}}1+\Oc(1)\right)=2\sum_{c<n\leq N}\sum_{\substack{d\leq \sqrt{f(n)}\\ f(n)\equiv 0\pmod d}}1+\Oc(N)\,.$$
Recall that $\delta=(b-c)^2/4$. We notice that $f(n)=(n-b)(n-c)=\left(n-(b+c)/2\right)^2-\left((b-c)/2\right)^2=\left(n-(b+c)/2\right)^2-\delta$ and the condition $f(n)\equiv 0\pmod d$ is equivalent to $\left(n-(b+c)/2\right)^2\equiv\delta\pmod d$. 
If we denote 
\begin{equation}\label{def:M} M(x,d):=\#\left\{1\leq m\leq x \quad :\quad  f(m)\equiv 0\pmod d\right\}\,,
\end{equation}
clearly we have 
$$M(x,d)=\frac x d \rho_{\delta}(d)+\Oc\left(\rho_{\delta}(d)\right)\,.$$
Then we proceed in the standard way.
\begin{align*}\label{Sigma12}S(N)&=2\sum_{d\leq X}\sum_{\substack{1\leq n\leq N\\ f(n)\equiv 0\pmod d}}1 +\Oc\left(\sum_{d\leq X}\rho_\delta(d)\right)+\Oc(N)\nonumber\\
&=2\sum_{d\leq X}M(N,d) +\Oc\left(\sum_{d\leq X}\rho_\delta(d)\right)+\Oc(N)\nonumber\\
&= 2N\sum_{d\leq X}\frac{\rho_{\delta}(d)}{d} +\Oc\left(\sum_{d\leq X}\rho_\delta(d)\right)+\Oc(N)\,.
\end{align*}

We have that $\delta\geq 1$ is a full square of an integer, so we can apply Theorem \ref{thm1}. As $X=N+\Oc(1)$ it follows that 
$$\sum_{d\leq X}\rho_{\delta}(d)\ll X\log X\ll N\log N\,.$$
Therefore we get
\begin{equation}\label{Sigma12}
S(N)=2N\sum_{d\leq X}\frac{\rho_{\delta}(d)}{d} +\Oc\left(N\log N\right)\,.
\end{equation}
Again using Theorem \ref{thm1} and Abel's summation we get
\begin{align*} \sum_{d\leq X}\frac{\rho_{\delta}(d)}{d} &= \frac{6}{\pi^2}\int_1^X \frac{\log t}t dt+{o}\left(\int_1^X \frac{\log t}t dt\right)+\Oc\left(\frac 1 X\sum_{d\leq X}\rho_{\delta}(d)\right)\\
&=\frac 3{\pi^2}\log^2 N+o\left(\log^2 N\right)\,.
\end{align*}

The statement of Theorem \ref{thm2} follows from plugging the latter asymptotic formula into \eqref{Sigma12}.
\end{proof}

\section{Explicit upper bound}

First we will repeat the argument of Hooley \cite{hooley1} so that we recreate his formula (11) in the identity \eqref{id:Dir}. We present the details of the argument for the sake of clarity and to work out precisely the specific quantities for our special case of a square-full discriminant $\delta$. In this section we will again sometimes omit $\delta$ in the notation for the function $\rho_\delta$, since $\delta$ is fixed. Recall \eqref{Drho} where for $\Re(s)>2$ we denoted $D_{\rho}(s)=F(s)=\prod_p A_p(s)$. Also let $r^2=\delta=(b-c)^2/4$ for $f(n)=(n-b)(n-c)$ and  $\beta, t\geq 0$ be such that $p^\beta||r^2$ for $p>2$,  $2^t||r^2$ and $2\beta':=\beta$, $2t':=t$. 
\par In case $(ii)$ when $p\mid r, p\neq 2$ we can continue the expression \eqref{eq:Ap2} in the following way:
\begin{align}\label{eq:Ap2_1}A_p(s)&=(1+p^{-s})\left(1+\frac{p}{p^{2s}}+\ldots+\frac{p^{\beta '-1}}{p^{(\beta-2)s}}+\frac{p^{\beta '}}{p^{\beta s}}(1-p^{-s})^{-1}\right)\nonumber\\
&=(1+p^{-s})\sum_{\gamma^2\mid p^{\beta}}\frac{\gamma}{\gamma^{2s}}\left(1-\Leg{p^{\beta}/\gamma^2}{p}\frac{1}{p^s}\right)^{-1}\nonumber\\
&=(1+p^{-s})\sum_{\gamma^2\mid p^{\beta}}\frac{\gamma}{\gamma^{2s}}\left(1-\Leg{\delta/\gamma^2}{p}\frac{1}{p^s}\right)^{-1}
\,,
\end{align}
because $\delta=r^2$ is a full square and $p^\beta||r^2$, and $\Leg{.}{.}$ is the Jacobi symbol.
\par If $p\nmid 2r$ we notice that $\beta=0$ and the last sum over $\gamma$ from \eqref{eq:Ap2_1} equals exactly $(1-p^{-s})^{-1}$, so we can rewrite in a similar way also the factors \eqref{eq:Ap1}. For $p\nmid 2r$ we obtain
\begin{equation}\label{eq:Ap1_1} A_p(s)=(1+p^{-s})\sum_{\gamma^2\mid p^{\beta}}\frac{\gamma}{\gamma^{2s}}\left(1-\Leg{\delta/\gamma^2}{p}\frac{1}{p^s}\right)^{-1}\,.
\end{equation}
The identity \eqref{eq:A2} for case $(iii)$ when $p=2$ can be continued as
\begin{align}\label{eq:A2_1_1} A_2(s)&=(1+2^{-s})\left[\sum_{\alpha=0}^{t'}\frac{2^{\alpha}}{2^{2\alpha s}}+\frac{2^{t'+1}}{2^{(t+2)s}}(1-2^{-s})(1-2^{-2s})^{-1}+\frac{2^{t'+2}}{2^{(t+3)s}}(1-2^{-2s})^{-1}\right] \nonumber\\
&=(1+2^{-s}) K(s),
\end{align}
where $$K(s):=\sum_{\alpha=0}^\infty \frac{a_\alpha}{2^{\alpha s}}$$
and a direct calculation shows that
\begin{equation}\label{def:a_alpha}a_\alpha=
\left\{\begin{array}{ll}2^{\alpha/2}& \text{ for }0\leq\alpha\leq t \text{ and }2\mid\alpha\,,\\
0 & \text{ for }1\leq\alpha\leq t+1 \text{ and }2\nmid\alpha\,,\\
2^{t'+1} & \text{ for }\alpha\geq t+2 \,.
\end{array}\right.
\end{equation}
From \eqref{eq:Ap2_1}, \eqref{eq:Ap1_1} and \eqref{eq:A2_1_1} and $\prod_p (1+p^{-s})=\zeta(s)/\zeta(2s)$ we obtain
\begin{align*}D_\rho(s)&=K(s)\frac{\zeta(s)}{\zeta(2s)}\prod_{p=3}^\infty \sum_{\gamma^2\mid p^{\beta}}\frac{\gamma}{\gamma^{2s}}\left(1-\Leg{\delta/\gamma^2}{p}\frac{1}{p^s}\right)^{-1}\\
&=K(s)\frac{\zeta(s)}{\zeta(2s)}\sum_{\substack{d^2\mid\delta\\(d,2)=1}}\frac{d}{d^{2s}}\prod_{p=3}^\infty\left(1-\Leg{\delta/p^{2u}}{p}\frac{1}{p^s}\right)^{-1}\\
&=K(s)\frac{\zeta(s)}{\zeta(2s)}\sum_{\substack{d^2\mid\delta\\(d,2)=1}}\frac{d}{d^{2s}}\prod_{p=3}^\infty\left(1-\Leg{\delta/d^2}{p}\frac{1}{p^s}\right)^{-1}\,,
\end{align*}
where $p^u||d$ and we used that $\Leg{\delta/p^{2u}}{p}=\Leg{\delta/d^2}{p}$.
Now recall that $\delta=2^t\Omega^2$ with even $t$ and odd $\Omega$, so we can finally write
\begin{equation}\label{id:Dir}
D_\rho(s)=\sum_{\lambda=1}^\infty\frac{\rho(\lambda)}{\lambda^s}=\sum_{\alpha=0}^\infty\frac{a_\alpha}{2^{\alpha s}}\sum_{d\mid\Omega}\frac{d}{d^{2s}}\sum_{h=1}^\infty\frac{\xi_d(h)}{h^s}\,,
\end{equation}
where 
\begin{equation}\label{def:xi}\sum_{h=1}^\infty\frac{\xi_d(h)}{h^s}:=\frac{\zeta(s)}{\zeta(2s)}\prod_{p=3}^\infty\left(1-\Leg{\delta/d^2}{p}\frac{1}{p^s}\right)^{-1}=\frac{\zeta(s)}{\zeta(2s)}\sum_{\substack{l=1\\(l,2)=1}}^\infty\Leg{\delta/d^2}{l}\frac{1}{l^s}\,.
\end{equation}
We introduce the character 
\begin{equation*}\chi_d(l)=\left\{
\begin{array}{ll}\Leg{\delta/d^2}{l}, &\text{ if }2\nmid l,\\ 
0, & \text{otherwise .}\end{array}
\right.
\end{equation*}
Here $$\Leg{\delta/d^2}{l}=\Leg{2^t\Omega^2/d^2}{l}=\Leg{\Omega^2/d^2}{l}$$
is $1$ or $0$, depending on whether the condition $(l,\Omega/d)=1$ holds. This means that the character $\chi_d$ is actually the principal character modulo $2\Omega/d$, i.e.
\begin{equation}\label{def:char}\chi_d(l)=\left\{
\begin{array}{ll}
1, &\text{if   } \,\,(l,2\Omega/d)=1,\\ 
0, & \text{otherwise .}\end{array}
\right.
\end{equation}
Now we can write 
\begin{equation}\label{def:xi_d}\sum_{h=1}^\infty\frac{\xi_d(h)}{h^s}=\frac{\zeta(s)}{\zeta(2s)}\sum_{l=1}^\infty\frac{\chi_d(l)}{l^s}\,.
\end{equation}
We note that 
$$\frac{\zeta(s)}{\zeta(2s)}=\sum_{n=1}^\infty \frac{\mu^2(n)}{n^s}=D_{\mu^2}(s)$$
and from \eqref{def:xi_d} and the uniqueness of Dirichlet series expansion it follows that for every $n\geq 1$ we have
$$\xi_d(n)=\sum_{lm=n}\mu^2(l)\chi_d(m)\,.$$
By \eqref{id:Dir} valid for $\Re s>2$, again using the uniqueness of Dirichlet series expansion, it follows that for the coefficients of the corresponding Dirichlet series we have the identity
$$\sum_{\lambda\leq X}\rho(\lambda)=\sum_{d\mid\Omega}\sum_{2^\alpha d^2h\leq X}a_\alpha d\xi_d(h)\,.$$
We summarize the results up to now in the following lemma.


\begin{lem}\label{lem:Dirichlet} Let $\delta=2^{2t'}\Omega^2$ for integers $t'\geq 0$ and $\Omega\geq 1$, such that $(\Omega,2)=1$. Given the definitions \eqref{def:ro}, \eqref{def:a_alpha}, \eqref{def:xi} and \eqref{def:char}, for any $X\geq 1$ and $d\mid\Omega$, we have the identities
$$\sum_{\lambda\leq X}\rho_\delta(\lambda)=\sum_{d\mid\Omega}\sum_{2^\alpha d^2h\leq X}a_\alpha d\xi_d(h)$$
and 
$$\xi_d(n)=\sum_{lm=n}\mu^2(l)\chi_d(m)\,.$$
   \end{lem}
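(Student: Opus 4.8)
The plan is to read off both identities as the coefficient-wise shadow of the Euler-product identity \eqref{id:Dir}, which in turn I would establish by rewriting the local factors $A_p(s)$ of $D_\rho(s)=\prod_p A_p(s)$ in a uniform shape and then regrouping the product. Throughout I work in the half-plane $\Re(s)>2$, where \eqref{Drho} converges absolutely, so that every rearrangement and coefficient comparison below is legitimate; the resulting identities between arithmetic functions then hold unconditionally, and summation over $\lambda\leq X$ yields the stated partial-sum identity for every $X\geq 1$.

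First I would put each local factor into the common form of \eqref{eq:Ap2_1}. For an odd prime $p\mid r$, starting from \eqref{eq:Ap2} I would verify by direct computation that
$$A_p(s)=(1+p^{-s})\sum_{\gamma^2\mid p^{\beta}}\frac{\gamma}{\gamma^{2s}}\left(1-\Leg{\delta/\gamma^2}{p}\frac{1}{p^s}\right)^{-1};$$
here $\gamma=p^j$ runs over $0\leq j\leq\beta'$, and the point is that for $j<\beta'$ the prime $p$ still divides $\delta/\gamma^2$, so the Jacobi symbol vanishes and the factor is trivial, whereas for $j=\beta'$ one has $\delta/\gamma^2=(r/p^{\beta'})^2$, a nonzero square modulo $p$, which reproduces the geometric tail $(1-p^{-s})^{-1}$ of \eqref{eq:Ap2}. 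For $p\nmid 2r$ the same identity holds trivially with $\beta=0$, and at $p=2$ I would collect the dyadic terms of \eqref{eq:A2} into $(1+2^{-s})K(s)$ with coefficients $a_\alpha$ as in \eqref{def:a_alpha}, which is exactly \eqref{eq:A2_1_1}.

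Next I would factor out $\prod_p(1+p^{-s})=\zeta(s)/\zeta(2s)$ and expand the product over the odd primes of the sums $\sum_{\gamma^2\mid p^{\beta}}\cdots$. By multiplicativity this product becomes a single sum over integers $d$ with $d^2\mid\delta$ and $(d,2)=1$, i.e. over $d\mid\Omega$ since $\delta=2^{2t'}\Omega^2$; the step that makes this collapse work is the identity $\Leg{\delta/p^{2u}}{p}=\Leg{\delta/d^2}{p}$ for $p^u||d$, valid because $(d/p^u)^2$ is a square coprime to $p$. This yields \eqref{id:Dir}, with the inner factor packaged as the Dirichlet series \eqref{def:xi}. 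I would then observe, using that $t=2t'$ is even so that $\Leg{2^t}{l}=1$ on odd $l$, that $\Leg{\delta/d^2}{l}=\Leg{(\Omega/d)^2}{l}$ equals $1$ exactly when $(l,\Omega/d)=1$; combined with the support condition $(l,2)=1$ this is precisely the principal character $\chi_d$ modulo $2\Omega/d$ of \eqref{def:char}, giving \eqref{def:xi_d}.

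Finally, the two asserted identities fall out by comparing Dirichlet coefficients. Writing $\zeta(s)/\zeta(2s)=\sum_n\mu^2(n)n^{-s}$ and reading \eqref{def:xi_d} as the product of this series with $\sum_l\chi_d(l)l^{-s}$, uniqueness of Dirichlet coefficients gives $\xi_d(n)=\sum_{lm=n}\mu^2(l)\chi_d(m)$. Expanding the triple sum in \eqref{id:Dir} and matching the coefficient of $\lambda^{-s}$ gives $\rho(\lambda)=\sum_{d\mid\Omega}\sum_{2^\alpha d^2h=\lambda}a_\alpha d\,\xi_d(h)$, and summing over $\lambda\leq X$ produces the first identity. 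I expect the main obstacle to be not any single estimate but the bookkeeping of the regrouping step: one must verify carefully that the Euler product over the odd primes reorganizes into the single divisor sum over $d\mid\Omega$ and that the Jacobi symbol $\Leg{\delta/d^2}{\cdot}$ genuinely collapses to a principal character. This is exactly where the square-full structure $\delta=2^{2t'}\Omega^2$ and the evenness of $t$ are used, and it is the one place where a wrong normalization would silently corrupt the constants.
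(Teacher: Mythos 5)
Your proposal is correct and follows essentially the same route as the paper: you rewrite the local factors $A_p(s)$ in the uniform Jacobi-symbol form \eqref{eq:Ap2_1}, regroup the Euler product over odd primes into a single sum over $d\mid\Omega$ via the identity $\Leg{\delta/p^{2u}}{p}=\Leg{\delta/d^2}{p}$, identify $\chi_d$ as the principal character modulo $2\Omega/d$, and then extract both identities by uniqueness of Dirichlet coefficients in the region $\Re(s)>2$ of absolute convergence. This is precisely the paper's derivation of \eqref{id:Dir} and the two identities summarized in the lemma, so there is nothing to add.
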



\subsection{Proof of Theorem \ref{thm3}} Using the notation \eqref{def:M}, and again the Dirichlet hyperbola method, we have
\begin{align*}S(N)&:=\sum_{c^*\leq n\leq N}\tau(f(n))=\sum_{c^*\leq n\leq N}\sum_{d\mid f(n)}1\leq 2\sum_{c^*\leq n\leq N}\sum_{\substack{d\leq\sqrt{f(n)}\\d\mid f(n)}}1\nonumber \\
 &\leq 2\sum_{d\leq X}\sum_{\substack{d\mid f(n)\\\max (c^*,b+d)\leq n\leq N}}1\leq 2\sum_{d\leq X}\sum_{\substack{1\leq n\leq N\\d\mid f(n)}}1=2\sum_{d\leq X}M(N,d)\,.
\end{align*}
We used that $(n-b)^2>(n-b)(n-c)\geq d^2$. One can achieve more precise upper bound for positive $c$ by more careful argument at this point. We made a cruder step by summing over all $1\leq n\leq N$ instead of considering only those $n$ which satisfy simultaneously $f(n)\geq d^2$ and $n\geq c^*$.
\par From the definition of the quantity $M(x,d)$ in \eqref{def:M} it is clear that 
\begin{equation}\label{ineq:M_rho}M(N,d)\leq \frac N d\rho(d)+\rho(d)\,.
\end{equation}
Therefore
\begin{equation}\label{eq:S_rho} S(N)\leq 2\sum_{d\leq X}\left(\frac N d\rho(d)+\rho(d)\right)=2N\sum_{d\leq X}\frac{\rho(d)}d+2\sum_{d\leq X}\rho(d)\,.
\end{equation}
In the sequel we will estimate explicitly the sum $\sum_{d\leq Y}\rho(d)$ for any $Y\geq 1$. From this we can easily extract also an explicit upper bound of  the sum $\sum_{d\leq Y}\rho(d)/d$ through Abel's summation. To achieve our goal we will use Lemma \ref{lem:Dirichlet} and the Dirichlet convolution representation it provides in a similar way as in our previous paper \cite{lapkova}, where Lemma 2.1 played a key role by providing a comfortable Dirichlet convolution. In the current case, however, we do not have factoring in familiar multiplicative functions directly of the function $\rho_\delta(d)$ for a square-free $\delta$, rather of another multiplicative function $\xi_d$ in the presentation of $\rho_\delta(d)$ for a square-full $\delta$. From one side this makes the argument more technical than in \cite{lapkova}, from the other side the character sums we consider in the present case are much simpler because we deal just with the principal character $\chi_d$.

\par From Lemma \ref{lem:Dirichlet} it follows that
\begin{equation}\label{eq:rho_xi} \sum_{\lambda\leq X}\rho(\lambda)=\sum_{d\mid\Omega}\sum_{2^\alpha d^2\leq X}a_\alpha d\sum_{h\leq X/(2^\alpha d^2)}\xi_d(h)\,.
\end{equation}
We concentrate first on estimating the innermost sum.


\subsubsection{Estimation of $\displaystyle\sum_{h\leq Y}\xi_d(h)$} Let $Y\geq 1$ and fix a divisor $d$ of $\Omega$. By the Dirichlet convolution representation in Lemma \ref{lem:Dirichlet} it follows that
\begin{equation}\label{eq:xi_mu_chi}\sum_{h\leq Y}\xi_d(h)=\sum_{lm\leq Y}\mu^2(l)\chi_d(m)=\sum_{l\leq Y}\mu^2(l)\sum_{m\leq Y/l}\chi_d(m)\,.\end{equation}
By \eqref{def:char} for a real $Y\geq 1$, and writing temporarily $q:=2\Omega/d$ for the conductor of the principal character $\chi_d$, we have
\begin{align}\label{eq:chi_Y} \sum_{m\leq Y}\chi_d(m) &=\sum_{m\leq Y}1-\sum_{\substack{m\leq Y\\(m,q)>1}}1=\sum_{m\leq Y}1-\sum_{\substack{f\mid q\\f\geq 2}}\sum_{m_1 \leq Y/f}1\nonumber\\
&=\left[Y\right]-\sum_{\substack{f\mid q\\f\geq 2}}\left[\frac Y f\right]\leq Y-\sum_{\substack{f\mid q\\f\geq 2}}\left(\frac Y f -1\right)\,.
\end{align}
Let us use the notation
$$\theta_a(q):=\sum_{\substack{d\mid q\\d\geq 2}}d^a=\sigma_a(q)-1\,.$$ 
Then the inequality \eqref{eq:chi_Y} can be written as
\begin{equation}\label{eq:chi_sigma} \sum_{m\leq Y}\chi_d(m) \leq Y\left(1-\theta_{-1}(q)\right)+\theta_0(q)\,.
\end{equation}
Plugging this in \eqref{eq:xi_mu_chi} we get
\begin{align}\label{eq:xi_Y}\sum_{h\leq Y}\xi_d(h)&\leq\sum_{l\leq Y}\mu^2(l)\left(\frac Y l \left(1-\theta_{-1}(q)\right)+\theta_0(q) \right)\nonumber\\
&=Y\left(1-\theta_{-1}(q)\right)\sum_{l\leq Y}\frac{\mu^2(l)}l+\theta_0(q)\sum_{l\leq Y}\mu^2(l)\,.
\end{align}
At this step we need to have $1-\theta_{-1}(q)=1-\theta_{-1}(2\Omega/d)\geq 0$ for every divisor $d\mid\Omega$. First we see that if $k\geq 1$ is odd, then 
\begin{equation}\label{eq:theta_sigma}\theta_{-1}(2k)=\frac 3 2 \sigma_{-1}(k)-1\,.
\end{equation}
Indeed, we notice that all divisors of $2k$ which are at least $2$ can be presented as $f=2^\gamma g$ for $\gamma\in\left\{0,1\right\}$ and $g\mid k$, with $\gamma=1$ when $g=1$, because $k$ is odd. Thus
$$\sum_{\substack{f\mid 2k\\f\geq 2}}\frac 1 f =\sum_{g\mid k}\frac 1 g - 1 +\frac 1 2 \sum_{g\mid k}\frac 1 g =\frac 3 2 \sum_{g\mid k}\frac 1 g -1\,.$$
Then for every $d\mid\Omega$ we indeed have
$$1-\theta_{-1}(2\Omega/d)=2-\frac 3 2\sigma_{-1}(\Omega/d)\geq 2-\frac 3 2\sigma_{-1}(\Omega)\geq 0\,,$$
because we have assumed the condition \eqref{cond}.

\par Now we can use an upper bound due to Ramar\'e (Lemma 3.4 \cite{ramare}).
\begin{lem}(Ramar\'e, \cite{ramare})\label{lemRamare} Let $x\geq 1$ be a real number. We have
$$\sum_{n\leq x}\frac{\mu^2(n)}n\leq \frac 6{\pi^2}\log x+1.166\,.$$
\end{lem}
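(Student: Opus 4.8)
The plan is to start from the squarefree-indicator identity $\mu^2(n)=\sum_{d^2\mid n}\mu(d)$, which converts the restriction to squarefree $n$ into a short divisor sum. Writing $n=d^2m$ and inverting the order of summation gives the exact identity
\[
\sum_{n\le x}\frac{\mu^2(n)}{n}=\sum_{d\le\sqrt x}\frac{\mu(d)}{d^2}\sum_{m\le x/d^2}\frac 1m=\sum_{d\le\sqrt x}\frac{\mu(d)}{d^2}\,H\!\left(\frac{x}{d^2}\right),
\]
where $H(y):=\sum_{m\le y}1/m$. Thus everything reduces to inserting a sufficiently precise, \emph{explicit} estimate for the harmonic sum and then summing against the well-understood weights $\mu(d)/d^2$.

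Next I would substitute $H(y)=\log y+\gamma+r(y)$ with an explicit bound $|r(y)|\le 1/(2y)$ for $y\ge 1$. Since $\log(x/d^2)=\log x-2\log d$, the main part splits as
\[
(\log x+\gamma)\sum_{d\le\sqrt x}\frac{\mu(d)}{d^2}\;-\;2\sum_{d\le\sqrt x}\frac{\mu(d)\log d}{d^2},
\]
plus the remainder $\sum_{d\le\sqrt x}\mu(d)\,r(x/d^2)/d^2$, whose absolute value is at most $\sum_{d\le\sqrt x}(2x)^{-1}\le 1/(2\sqrt x)$. Completing the two main sums to infinity and using $\sum_{d\ge 1}\mu(d)/d^2=1/\zeta(2)=6/\pi^2$ together with $\sum_{d\ge 1}\mu(d)\log d/d^2=\zeta'(2)/\zeta(2)^2$ produces the leading term $\frac{6}{\pi^2}\log x$ and the limiting constant $\frac{6}{\pi^2}\gamma-2\zeta'(2)/\zeta(2)^2$, which numerically is about $1.044$. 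This already explains both the coefficient $6/\pi^2$ and why a constant near $1.166$ should be attainable.

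The delicate step is to control the tails produced by completing the sums. Here $\bigl|\sum_{d>\sqrt x}\mu(d)/d^2\bigr|\le\sum_{d>\sqrt x}d^{-2}<1/\lfloor\sqrt x\rfloor$ and $\bigl|\sum_{d>\sqrt x}\mu(d)\log d/d^2\bigr|\le\int_{\sqrt x}^{\infty}t^{-2}\log t\,dt=(\tfrac12\log x+1)/\sqrt x$, so the total error is $\Oc(\log x/\sqrt x)$. The main obstacle is that this $\Oc(\log x/\sqrt x)$ term, although tending to $0$, exceeds the available budget $1.166-1.044$ for \emph{moderate} $x$, because these elementary tail bounds are lossy: they discard the cancellation in $\mu$. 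Two things are therefore needed to finish. First, honest bookkeeping of every explicit constant (the harmonic remainder, the two tails, and the gamma/zeta-derivative constants) to establish the inequality for all $x$ beyond an explicit threshold $x_0$. Second, a \emph{finite verification} for $1\le x<x_0$, which is clean in principle: the left-hand side is a step function that is constant between consecutive squarefree integers while $\frac{6}{\pi^2}\log x+1.166$ is strictly increasing, so it suffices to check the inequality at each squarefree integer $m<x_0$. An alternative that lowers $x_0$ is to replace the lossy tail estimates by an explicit bound for the partial sums of $\mu(d)/d^2$, equivalently to run the argument through Abel summation against the explicit squarefree count $\sum_{n\le t}\mu^2(n)=\frac{6}{\pi^2}t+\Oc(\sqrt t)$, which is precisely where Ramar\'e's sharper input enters.
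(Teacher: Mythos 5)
This lemma is not proved in the paper at all: it is imported verbatim as Lemma~3.4 of Ramar\'e's paper \cite{ramare}, so there is no internal argument to compare yours against, and your attempt must be judged as a standalone proof of an \emph{explicit} inequality. Your skeleton is the standard and correct one: the convolution $\mu^2(n)=\sum_{d^2\mid n}\mu(d)$, the reduction to harmonic sums $H(x/d^2)$, and the identification of the limiting constant $\frac{6}{\pi^2}\gamma-2\zeta'(2)/\zeta(2)^2\approx 1.044<1.166$ are all sound, and with enough care this route does lead to a bound of the stated shape.

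However, as written the argument has two genuine gaps. First, a technical one: the bound $|r(y)|\le 1/(2y)$ for $H(y)=\log y+\gamma+r(y)$ is valid at integer arguments but \emph{false} for real $y\ge 1$, and your sums evaluate $H$ at the real points $x/d^2$; for instance as $y\to 2^-$ one has $|r(y)|\to |1-\log 2-\gamma|\approx 0.270>1/(2y)=0.25$. For real arguments one only has roughly $|r(y)|\le 1/y$, which doubles that piece of your error budget. Second, and decisively: the inequality with the constant $1.166$ is never actually established. Your completed-sum error is, with explicit constants, of size roughly $(2\log x+3)/\sqrt{x}$, which stays above the available margin $1.166-1.044\approx 0.122$ until $x$ is of order $10^5$; you acknowledge this and defer both the computation of an explicit threshold $x_0$ and the finite verification on $[1,x_0)$, neither of which is carried out (nor is the monotonicity reduction to checking squarefree integers accompanied by any actual check). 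For a lemma whose entire content is the explicit constant, this deferred bookkeeping \emph{is} the proof; what you have shown is only the asymptotic statement $\sum_{n\le x}\mu^2(n)/n=\frac{6}{\pi^2}\log x+1.044\ldots+\Oc\left(\log x/\sqrt{x}\right)$, not the claimed inequality for all $x\ge 1$. To close the gap you would need either the sharper tail input you mention at the end (an explicit bound on partial sums of $\mu(d)/d^2$, or Abel summation against an explicit count of squarefree integers) together with a documented finite computation, or simply to cite Ramar\'e as the paper does.
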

Using also the trivial bound for the last sum in \eqref{eq:xi_Y}, we get
\begin{align}\sum_{h\leq Y}\xi_d(h)&\leq Y\left(1-\theta_{-1}(q)\right)\left(\frac 6{\pi^2}\log Y+1.166\right)+\theta_0(q)Y\nonumber\\
&=\frac 6{\pi^2}\left(1-\theta_{-1}(q)\right)Y\log Y+\left(1.166\left(1-\theta_{-1}(q)\right)+\theta_0(q)\right)Y\nonumber\\
&=:c_1(q)Y\log Y+c_2(q)Y\,.
\end{align}


\subsubsection{Estimation of $\displaystyle\sum_{\lambda\leq X}\rho_\delta(\lambda)$}

Now we plug the latter bound in the first identity from Lemma \ref{lem:Dirichlet}. Using that $c_1(q)\geq 0$, we get
\begin{align*}
\sum_{\lambda\leq X}\rho_\delta(\lambda)&=\sum_{d\mid\Omega}\sum_{2^\alpha d^2\leq X}a_\alpha d\sum_{h\leq X/(2^\alpha d^2)}\xi_d(h)\nonumber\\
&\leq \sum_{d\mid\Omega}\sum_{2^\alpha d^2\leq X}a_\alpha d\left(c_1(q)\frac X{2^\alpha d^2}\log{\frac X{2^\alpha d^2}}+c_2(q)\frac X{2^\alpha d^2}\right)\nonumber\\
&\leq \frac 6{\pi^2}X\log X\sum_{d\mid\Omega}\frac 1 d\left(1-\theta_{-1}(2\Omega/d)\right)\sum_{2^\alpha\leq X/d^2}\frac{a_\alpha}{2^\alpha}+X\sum_{d\mid\Omega}\frac 1 dc_2(2\Omega/d)\sum_{2^\alpha\leq X/d^2}\frac{a_\alpha}{2^\alpha}\,.
\end{align*}

By a direct calculation using \eqref{def:a_alpha}, or by \eqref{eq:A2_1} and \eqref{eq:A2_1_1}, we see that 
\begin{equation}\label{eq:K1}
K(1)=\sum_{\alpha=0}^\infty\frac{a_\alpha}{2^\alpha}=2\,.
\end{equation}

Therefore we can bound the partial sums of $a_\alpha/2^\alpha$ by $2$ and we obtain 
\begin{equation}\label{rho_C_d} 
\sum_{\lambda\leq X}\rho_\delta(\lambda)\leq  \frac {12}{\pi^2}C_1(\Omega)X\log X+C_2(\Omega)X\,,
\end{equation}
where 
\begin{equation}\label{def:C1_Omega}
C_1(\Omega):=\sum_{d\mid\Omega}\frac 1 d\left(1-\theta_{-1}(2\Omega/d)\right)=\sum_{d\mid\Omega}\frac 1 d\left(1-\sum_{\substack{f\mid 2\Omega/d\\f\geq 2}}\frac 1 f\right)
\end{equation}
and $$C_2(\Omega):=2\sum_{d\mid\Omega}\frac 1 dc_2(2\Omega/d)=2\sum_{d\mid\Omega}\frac 1 d\left(1.166\left(1-\sum_{\substack{f\mid 2\Omega/d\\f\geq 2}}\frac 1 f\right)+\sum_{\substack{f\mid 2\Omega/d\\f\geq 2}}1\right)\,.$$

For the constant $C_1(\Omega)$ we have the following crucial upper bound which would guarantee the right main term.

\begin{lem}\label{lem:C1} Let $\Omega\geq 1$ be an odd integer satisfying $\sigma_{-1}(\Omega)\leq 4/3$. Then the constant $C_1(\Omega)$ defined in \eqref{def:C1_Omega} satisfies the inequalities
$$0<C_1(\Omega)\leq \frac 1 2\,.$$
\end{lem}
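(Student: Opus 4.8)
The plan is to first reduce $C_1(\Omega)$ to a transparent closed form, from which the upper bound is immediate and the lower bound follows from a single elementary comparison.

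First I would exploit that every divisor $\Omega/d$ occurring in \eqref{def:C1_Omega} is odd, so \eqref{eq:theta_sigma} gives $1-\theta_{-1}(2\Omega/d)=2-\tfrac32\sigma_{-1}(\Omega/d)$ and hence
$$C_1(\Omega)=2\sigma_{-1}(\Omega)-\frac32\,T(\Omega),\qquad T(\Omega):=\sum_{d\mid\Omega}\frac{\sigma_{-1}(\Omega/d)}{d}\,.$$
Expanding the inner $\sigma_{-1}$ and collecting the terms of the resulting double divisor sum according to the value $m=de$ (each $m\mid\Omega$ arises from exactly $\tau(m)$ ordered factorizations $m=de$) yields $T(\Omega)=\sum_{m\mid\Omega}\tau(m)/m$, so that
$$C_1(\Omega)=\sum_{d\mid\Omega}\frac{4-3\tau(d)}{2d}\,.$$

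For the upper bound this closed form already suffices, and the hypothesis \eqref{cond} is not needed: the term $d=1$ contributes $\tfrac12$, while every term with $d>1$ has $\tau(d)\geq2$, hence $4-3\tau(d)\leq-2<0$. Summing gives $C_1(\Omega)\leq\tfrac12$ unconditionally (with equality exactly at $\Omega=1$). For the lower bound the key step is the inequality $T(\Omega)\leq\sigma_{-1}(\Omega)^2$, which I would prove combinatorially: writing $T(\Omega)=\sum 1/(de)$ over ordered pairs $(d,e)$ of positive integers with $de\mid\Omega$, this is a partial sum, over a subset of the index set and with positive summands, of $\sigma_{-1}(\Omega)^2=\sum 1/(de)$ taken over all pairs with $d\mid\Omega$ and $e\mid\Omega$ separately; the inclusion is proper for $\Omega>1$ (e.g. $(d,e)=(\Omega,\Omega)$ occurs only on the right), so the inequality is strict there. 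Substituting back gives $C_1(\Omega)\geq\sigma_{-1}(\Omega)\bigl(2-\tfrac32\sigma_{-1}(\Omega)\bigr)$, and the hypothesis $\sigma_{-1}(\Omega)\leq\tfrac43$ makes the second factor nonnegative. Combining this with the strictness $T(\Omega)<\sigma_{-1}(\Omega)^2$ for $\Omega>1$ (and the direct value $C_1(1)=\tfrac12$ for $\Omega=1$) gives $C_1(\Omega)>0$ in every case.

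The only real content is the passage to the closed form and the comparison $T(\Omega)\leq\sigma_{-1}(\Omega)^2$; once these are in hand both bounds are forced, so I expect the bookkeeping in the double-sum reindexing and the verification of strictness to be the main (though still elementary) points to get right. The role of hypothesis \eqref{cond} is solely to keep the factor $2-\tfrac32\sigma_{-1}(\Omega)$ nonnegative, and the value $\sigma_{-1}(\Omega)=\tfrac43$ (reached e.g. at $\Omega=3$) is precisely the threshold beyond which this crude lower estimate would cease to be usable.
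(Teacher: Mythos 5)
Your proof is correct --- every step checks out: the reduction via \eqref{eq:theta_sigma} to $C_1(\Omega)=2\sigma_{-1}(\Omega)-\tfrac32 T(\Omega)$, the reindexing $T(\Omega)=\sum_{m\mid\Omega}\tau(m)/m$ (pairs $(d,e)$ with $d\mid\Omega$, $e\mid\Omega/d$ are exactly the pairs with $de\mid\Omega$), the resulting closed form $C_1(\Omega)=\sum_{d\mid\Omega}\frac{4-3\tau(d)}{2d}$, and both bounds. The skeleton coincides with the paper's: the paper also applies \eqref{eq:theta_sigma}, also isolates the $d=1$ contribution to get the unconditional bound $C_1(\Omega)\le\tfrac12-\sum_{d\mid\Omega,\,d>1}1/d$, and also uses hypothesis \eqref{cond} only to keep $2-\tfrac32\sigma_{-1}$ nonnegative. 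Where you genuinely diverge is the positivity argument: the paper works \emph{termwise}, noting that each summand $\tfrac1d\bigl(1-\theta_{-1}(2\Omega/d)\bigr)=\tfrac1d\bigl(2-\tfrac32\sigma_{-1}(\Omega/d)\bigr)$ is nonnegative because $\sigma_{-1}(\Omega/d)\le\sigma_{-1}(\Omega)\le\tfrac43$, and that the term $d=p$ (for a prime $p\mid\Omega$) is strictly positive; you instead prove the \emph{aggregate} inequality $T(\Omega)\le\sigma_{-1}(\Omega)^2$ by viewing $T$ as a sub-sum of the Cartesian-product expansion of $\sigma_{-1}(\Omega)^2$, with strictness supplied by the pair $(\Omega,\Omega)$. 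These are two proofs of the same inequality (summing the paper's termwise bounds with weights $1/d$ yields exactly your $T\le\sigma_{-1}^2$), so neither is stronger, but the trade-off is real: your closed form in terms of $\tau(d)$ makes the upper bound and the equality case $\Omega=1$ completely transparent, while the paper's termwise route is shorter for the lower bound and identifies concretely which summand carries the positivity. Your observation that the upper bound needs no hypothesis at all also matches the paper, where \eqref{cond} likewise enters only in the final positivity step.
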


\begin{proof} 
From \eqref{eq:theta_sigma} it follows that
$$\theta_{-1}(2\Omega/d)=\frac 3 2\sigma_{-1}(\Omega/d)-1\,.$$
Then 
$$C_1(\Omega)=\sum_{d\mid\Omega}\frac 1 d \left(2-\frac 3 2 \sum_{g\mid \Omega/d}\frac 1 g \right)=2\sum_{d\mid\Omega}\frac 1 d -\frac 3 2 \sum_{d\mid\Omega}\frac 1 d \sum_{g\mid \Omega/d}\frac 1 g\,.
$$
When we take out the contribution of $d=1$ from the first sum and of $d=1$ and $g=1$ from the second sum we get
\begin{align*}
C_1(\Omega)&=2+2\sum_{\substack{d\mid\Omega\\d>1}}\frac 1 d -\frac 3 2 -\frac 3 2\sum_{\substack{g\mid\Omega\\g>1}}\frac 1 g -\frac 3 2\sum_{\substack{d\mid\Omega\\d>1}}\frac 1 d\sum_{g\mid\Omega/d}\frac 1 g\\
&=\frac 1 2+\frac 1 2\sum_{\substack{d\mid\Omega\\d>1}}\frac 1 d -\frac 3 2\sum_{\substack{d\mid\Omega\\d>1}}\frac 1 d\sum_{g\mid\Omega/d}\frac 1 g\,.
\end{align*}
If there are divisors $d\mid\Omega$ which are greater than $1$, then the innermost sums satisfy $\sum_{g\mid\Omega/d}1/g\geq 1$ with contribution at least from $g=1$. Then 
$$C_1(\Omega)\leq \frac 1 2 +\frac 1 2\sum_{\substack{d\mid\Omega\\d>1}}\frac 1 d -\frac 3 2\sum_{\substack{d\mid\Omega\\d>1}}\frac 1 d=\frac 1 2 -\sum_{\substack{d\mid\Omega\\d>1}}\frac 1 d \leq \frac 1 2\,.$$
In particular, when $\Omega=1$, we have $C_1(\Omega)=1/2$. When $\Omega>1$, by the condition \eqref{cond} we know that for every $d\mid\Omega$ we have $1-\theta_{-1}(2\Omega/d)\geq 0$. Note that in this case there is at least one prime divisor $p\geq 3$ such that $p\mid\Omega$ and we have the strict inequality $0\leq 1-\theta_{-1}(2\Omega)<1-\theta_{-1}(2\Omega/p)$. Then by \eqref{def:C1_Omega} we surely have $C_1(\Omega)>0$.
\end{proof}

From Lemma \ref{lem:C1} and \eqref{rho_C_d} we arrive at
$$\sum_{\lambda\leq X}\rho_\delta(\lambda)\leq  \frac {6}{\pi^2}X\log X+C_2(\Omega)X\,.$$
From \eqref{eq:theta_sigma} and the analogous observation $\theta_0(2\Omega/d)=\sum_{f\mid 2\Omega/d, f\geq 2}1=2\sigma_0(\Omega/d)-1$ we check by a direct calculation that $C_2(\Omega)=C(\Omega)$. Therefore 
\begin{equation}\label{rho_bound}
\sum_{\lambda\leq X}\rho_\delta(\lambda)\leq  \frac {6}{\pi^2}X\log X+C(\Omega)X\,.
\end{equation}
 

\subsubsection{Estimation of $\displaystyle\sum_{\lambda\leq X}\rho_\delta(\lambda)/\lambda$} Write $P(X):=\sum_{\lambda\leq X}\rho_\delta(\lambda)$. By \eqref{rho_bound} and Abel's summation formula we have
\begin{align}\label{rho_lambda_bound} \sum_{\lambda\leq X}\frac{\rho_\delta(\lambda)}{\lambda}&=\frac{P(X)}{X}  -\int_1^X P(u)\left(\frac 1 u\right)'du=\frac{P(X)}{X}  +\int_1^X P(u)\frac {du} {u^2}\nonumber\\
&\leq \frac 1 X\left(\frac {6}{\pi^2}X\log X+C(\Omega)X\right)+\int_1^X \left(\frac {6}{\pi^2}u\log u+C(\Omega)u\right)\frac {du} {u^2}\nonumber\\
&\leq \frac {6}{\pi^2}\log X+C(\Omega)+\frac {6}{\pi^2}\int_1^X \frac{\log u}{u}du+C(\Omega)\int_1^X\frac{du}{u}\nonumber\\
&=\frac {3}{\pi^2}\log^2 X+\left(\frac {6}{\pi^2}+C(\Omega)\right)\log X+C(\Omega)\,.
\end{align}

Now using \eqref{rho_bound} and \eqref{rho_lambda_bound}, the inequality \eqref{eq:S_rho} turns into
$$S(N)\leq 2N\left[\frac {3}{\pi^2}\log^2 X+\left(\frac {6}{\pi^2}+C(\Omega)\right)\log X+C(\Omega)\right]+2\left[\frac {6}{\pi^2}X\log X+C(\Omega)X\right]\,,$$
where $X=\sqrt{f(N)}$. This proves Theorem \ref{thm3}.

\subsection{Proof of Corollary \ref{cor4}} Instead of directly applying Theorem \ref{thm3} we will take use of the specific form of $\delta$ and $f(n)$. This way we can gain better minor terms coefficients, whereas the main coefficient remains the right one. When $\delta=4^s$, i.e. $\Omega=1$, then clearly we have only one divisor of $\Omega$ and a single character to consider : $\chi_d=\chi_1$ which is $1$ at even numbers and $0$ at odd. Then 
$$\sum_{m\leq Y}\chi_1(m)=\left[\frac{Y+1}2\right]\leq \frac{Y+1}2\,.$$
From \eqref{eq:xi_mu_chi} and Lemma \ref{lemRamare} we get
\begin{align*}\sum_{h\leq Y}\xi_1(h)&=\sum_{l\leq Y}\mu^2(l)\sum_{m\leq Y/l}\chi_1(l)\leq \sum_{l\leq Y}\mu^2(l)\left(\frac Y{2l}+\frac 1 2\right)\\
&=\frac Y 2\sum_{l\leq Y}\frac{\mu^2(l)}l+\frac 1 2 \sum_{l\leq Y}\mu^2(l)\leq\frac Y 2\left(\frac 6{\pi^2}\log Y +1.166\right)+\frac Y 2\\
&=\frac 3{\pi^2}Y\log Y +2.166\frac Y 2\,.
\end{align*}
Then by Lemma \ref{lem:Dirichlet}, the latter inequality and \eqref{eq:K1} we see that
\begin{align*}\sum_{\lambda\leq X}\rho_{4^s}(\lambda)&=\sum_{2^\alpha\leq X}a_\alpha\sum_{h\leq X/2^\alpha}\xi_1(h)\leq \sum_{2^\alpha\leq X}a_\alpha\left(\frac 3{\pi^2}\frac{X}{2^\alpha}\log \frac{X}{2^\alpha} +2.166\frac{X}{2\cdot 2^\alpha}\right) \\
&\leq \frac 3{\pi^2} X\log X  \sum_{2^\alpha\leq X}\frac{a_\alpha}{2^\alpha}+ \frac {2.166}{2} X\sum_{2^\alpha\leq X}\frac{a_\alpha}{2^\alpha}<\frac 6{\pi^2} X\log X+2.166\cdot X
\end{align*}
From the last inequality and applying Abel's summation we obtain
$$\sum_{\lambda\leq N} \frac{\rho_{4^s}(\lambda)}{\lambda}\leq \frac 3{\pi^2}(\log N)^2+\left(\frac 6{\pi^2}+2.166\right) \log N +2.166\,.$$
and the statement of $i)$ follows after a decimal approximation of the second coefficient.\\

For the proof of $ii)$ we note that
$$\sum_{n=2}^N\tau(n^2-1)\leq 2\sum_{n=2}^N \sum_{\substack{1\leq d<n\\d\mid (n^2-1)}}1=2\sum_{d=1}^N\sum_{\substack{d<n\leq N\\n^2\equiv 1(d)}}1=2\sum_{d=1}^N(M(N,d)-M(d,d))\,.$$
Clearly $M(d,d)=\rho_1(d)$ and by \eqref{ineq:M_rho} it follows that
$$\sum_{n=2}^N\tau(n^2-1)\leq 2N\sum_{d=1}^N\frac{\rho_1(d)}{d}\,.$$
The second statement of Corollary \ref{cor4} follows from applying the inequality $i)$ to the innermost sum. 

\subsection{Examples} To provide an explicit upper bound for the average divisor sum over any reducible quadratic polynomial $f(n)=(n-b)(n-c)$ with $\delta=2^{t}\Omega^2$ where $\Omega$ is odd, $t\geq 0$ is even, Theorem \ref{thm3} requires the condition \eqref{cond} for the divisors of $\Omega$. In Corollary \ref{cor4} we showed an improved such upper bound, which is valid when $\Omega=1$ and $f(n)=n^2-1$. In this subsection we would give some more examples when Theorem \ref{thm3} holds. 
\begin{enumerate}[I.]
\item \underline{$\Omega=p, p\geq 3$ is a prime.}
Indeed, then 
$$\sigma_{-1}(\Omega)=\sum_{d\mid\Omega}\frac 1 d=1+\frac 1 p \leq 1+\frac 1 3=\frac 4 3\,.$$
\item \underline{$\Omega=p^k, p\geq 5$ for integer $k\geq 2$.} Indeed, if $p=3$ and $k\geq 2$ condition \eqref{cond} fails. If $p\geq 5$ we need to have
$$\sum_{d\mid\Omega}\frac 1 d=1+\frac 1 p + \ldots +\frac 1 {p^k}=\frac{1-p^{-(k+1)}}{1-p^{-1}}\leq \frac 4 3\,,$$ which is equivalent to $1-1/p^{k+1}\leq 4/3 -4/(3p)$, or further to $4/(3p)-1/p^{k+1}\leq 1/3$. The latter is true because $4/(3p)-1/p^{k+1}<4/(3p)\leq 4/(3\cdot 5)<1/3$.\\

\item \underline{$\Omega=pq, 5\leq p<q$ and $q\geq 3(p+1)/(p-3)$.} Like in example II. one sees that if $p=3$ the sum $\sigma_{-1}(\Omega)>4/3$. Then one easily obtains the second condition on $q$ starting from the necessary inequality
$$\sum_{d\mid\Omega}\frac 1 d=1+\frac 1 p + \frac 1 q+\frac 1 {pq}\leq \frac 4 3\,.$$
Thus when $p=5$ we can have any $q\geq 11$.

\end{enumerate}

\subsection*{Funding.} 
This work was supported by the  Austrian Science Fund (FWF) [ T846-N35]; and partially by the National
Research, Development and Innovation Office (NKFIH) [K104183].


\end{document}